\documentclass[11pt]{article}
\usepackage{mathrsfs}
\usepackage{amsmath, mathtools}
\usepackage{amssymb}
\usepackage{amsthm}
\usepackage{graphicx}
\usepackage{epic}
\usepackage{xcolor,cite}
\usepackage{cases}
\usepackage{pst-poly}
\usepackage{pst-plot}
\usepackage{CJK,makecell}
\usepackage{multirow}

\usepackage{verbatim}
\usepackage{subfig}
\usepackage{booktabs}
\definecolor{dkgreen}{rgb}{0,0.6,0}
\numberwithin{equation}{section}

\newcommand{\arxiv}[2]{\href{https://arxiv.org/abs/#1}{\texttt{arXiv:#1}} \texttt{[#2]}}

\usepackage{etex}

\usepackage[left,mathlines,displaymath]{lineno}

\usepackage{pgf}
\usepackage{listings}
\definecolor{mauve}{rgb}{0.58,0,0.82}
\definecolor{dkgreen}{rgb}{0,0.6,0}
\lstdefinestyle{pitonche} {
    language = Python,
    basicstyle = footnotesizettfamily,
    showspaces = false,
    showstringspaces = false,
    breakautoindent = true,
    flexiblecolumns = true,
    keepspaces = true,
    stepnumber = 1,
    xleftmargin = 0pt
}
\renewcommand{\paragraph}{\roman{paragraph}}

\usepackage{bm}
\usepackage[hidelinks]{hyperref}
\usepackage{tikz}
\usetikzlibrary{automata}
\usepackage{enumitem}
\usetikzlibrary{calc}
\usetikzlibrary{arrows,shapes,positioning}
\usetikzlibrary{decorations.markings}
\tikzstyle arrowstyle=[scale=1]
\tikzstyle directed=[postaction={decorate,decoration={markings, mark=at position .65 with {\arrow[arrowstyle]{stealth}}}}]
\tikzstyle reverse directed=[postaction={decorate,decoration={markings, mark=at position .65 with {\arrowreversed[arrowstyle]{stealth};}}}]

\newtheorem{claim}{Claim}[section]
\newtheorem{theorem}{Theorem}[section]
\newtheorem{corollary}[theorem]{Corollary}

\newtheorem{problem}[theorem]{Problem}
\newtheorem{lemma}[theorem]{Lemma}

\newtheorem{proposition}[theorem]{Proposition}

\renewcommand{\qedsymbol}{\scalebox{0.7}{$\blacksquare$}}

\usepackage{authblk}

\title{Maximum number of spanning trees in\\bipartite graphs with a given diameter%
\thanks{Email addresses: \texttt{xush0928@163.com} (S.~Xu), \texttt{ivan.damnjanovic@elfak.ni.ac.rs} (I.~Damnjanovi\'c), \texttt{kexxu1221@126.com} (K.~Xu).}}

\author[1,2]{Shaohan Xu}
\author[3,4]{Ivan Damnjanovi\'{c}\thanks{Corresponding author.}}
\author[1,2]{Kexiang Xu}

\affil[1]{School of Mathematics, Nanjing University of  Aeronautics and Astronautics,\linebreak Nanjing, Jiangsu, 210016, PR China}
\affil[2]{MIIT Key Laboratory of Mathematical Modelling and High Performance Computing of Air \linebreak Vehicles, Nanjing, Jiangsu, 210016, PR China}
\affil[3]{Faculty of Electronic Engineering, University of Ni\v{s}, Aleksandra Medvedeva 4, Ni\v{s}, 18104, Serbia}
\affil[4]{Faculty of Mathematics, Natural Sciences and Information Technologies, University of Primorska,\linebreak Glagolja\v{s}ka 8, Koper, 6000, Slovenia}

\date{}

\begin{document}

\maketitle

\begin{abstract}
The number of spanning trees is a classical graph invariant and an important measure of network reliability, as it counts the minimal connected spanning substructures that can maintain communication in a network. Let $\mathcal{B}(n,d)$ be the set of connected bipartite graphs of order $n$ and diameter $d$. Motivated by reliability design problems for bipartite network models with fixed order and diameter, this paper determines all graphs with the maximum number of spanning trees in $\mathcal{B}(n,d)$. The result gives an extremal characterization of bipartite network topologies with the largest number of connected spanning backbones under prescribed order and diameter constraints, and provides a structural reference for the design of reliable bipartite networks.
\end{abstract}

\medskip\noindent
{\bf Keywords:} spanning tree; bipartite graph; diameter; network reliability

\medskip\noindent
\textbf{AMS Subj.\ Class.}: 05C05, 05C30, 05C35

\section{Introduction}
All graphs considered in this paper are finite, simple, and undirected. Let $G$ be a graph with vertex set $V(G)$ and edge set $E(G)$. For a vertex $v\in V(G)$, let $N_{G}(v)$ be the set of vertices adjacent to $v$ in $G$, where $d_{G}(v)=|N_{G}(v)|$ is the degree of $v$ in $G$. For any subset $S\subseteq V(G)$, $G[S]$ is the subgraph of $G$ induced by $S$. For any $V'\subseteq V(G)$ and $E'\subseteq E(G)$, let $G-V'$ (resp.\ $G-E'$) be the subgraph of $G$ obtained by deleting all vertices in $V'$ and all edges incident with them (resp.\ all edges in $E'$) . The {\it distance} between two vertices $u$ and $v$ in $G$ is denoted by $d_{G}(u,v)$. The {\it diameter} of $G$ is the maximum distance between any two vertices in $G$. Let $\mathcal{B}(n,d)$ be the set of connected bipartite graphs with diameter $d$ and  order~$n\ge d+1$. Let $\mathcal{T}(G)$ be the set of spanning trees of $G$, and let $\tau(G)=|\mathcal{T}(G)|$ be the number of spanning trees of $G$.

The enumeration of spanning trees in a graph is an important and popular problem in graph theory, which has connections with many other fields, including theoretical physics and computer science, among others. For example, the number of spanning trees $\tau(G)$ is a key parameter in Tutte polynomials \cite{T2,E12},  and it is also equal to the order of the critical group \cite{C5,D3} of the graph, also known as the sandpile group in statistical physics or the Picard group in algebraic geometry.  Moreover, the number of spanning trees has a close relationship with the partition function of the $q$-state Potts model in statistical mechanics \cite{W2, F1}. Several explicit formulas for counting spanning trees in some special classes of graphs have been established, such as graphs with symmetry \cite{Y1}, line graphs \cite{D1,G2}, Bruhat graphs \cite{R2}, Apollonian networks \cite{ZhangWuComellas14}, almost-complete multipartite graphs \cite{C3} and the $K_n$-complement of bipartite graphs \cite{G1}. Related electrical network transformations have also been used to derive resistance distances and spanning tree counting formulas in weighted multipartite graph models \cite{ChengGe26}. For other relevant results in the literature, the readers can refer to \cite{E1,K2,T1,Z1,Z2}.

The number of spanning trees also has a natural interpretation in network reliability. From the viewpoint of network theory, suppose that all nodes of a network $G$ are perfectly reliable and that all links operate independently with the same probability $p$. If $\zeta_i(G)$ denotes the number of connected spanning subgraphs of $G$ with $i$ edges, then the all-terminal reliability of $G$ is defined by
\[
R(G,p)=\sum_{i=n-1}^{m}\zeta_i(G)p^i(1-p)^{m-i},
\]
where $n=|V(G)|$ and $m=|E(G)|$. Since $\zeta_{n-1}(G)=\tau(G)$, the number of spanning trees is precisely the coefficient corresponding to connected spanning subgraphs with the minimum possible number of edges. When $p$ is small, the reliability polynomial is dominated by the term corresponding to spanning trees. Hence, the number of spanning trees is commonly used as a combinatorial indicator of network reliability, since a graph with more spanning trees has more minimal connected spanning structures available to maintain connectivity \cite{C2}. The spanning tree number reflects the connectivity robustness of a network. A larger value of this parameter indicates that the network has more alternative spanning backbones to maintain connectivity under edge failures.

From the perspective of extremal graph theory and network reliability, it is natural to determine the maximum or minimum number of spanning trees, together with the corresponding extremal graphs, among special classes of graphs. This problem has applications in experimental design \cite{C1} and network theory \cite{K1}. Several results on this extremal problem have been obtained. Grimmett \cite{G4}, Grone and Merris \cite{G5} and Das \cite{D2} established upper bounds on $\tau(G)$ for various graphs with given parameters. Bogdanowicz characterized the graphs that attain the minimum number of spanning trees in chordal graphs with a fixed vertex connectivity \cite{B7,B9} and in all connected graphs with $n$ vertices and $m$ edges \cite{B6,B8}. Ok and Thomassen \cite{O1} determined lower bounds on $\tau(G)$ for a $k$-edge-connected multigraph, together with the corresponding extremal multigraphs when $k$ is even. Pek\'arek, Sereni and Yilma \cite{P3} fully characterized all extremal multigraphs with the minimum number of spanning trees in connected $k$-regular multigraphs.

In addition, such extremal problems for the number of spanning trees have also been studied in bipartite graphs. Gong, Xu, Zou and Wang \cite{G8} characterized the graphs minimizing the number of spanning trees among bipartite graphs with a fixed cyclomatic number. All bipartite graphs with given vertex (resp.\ edge) connectivity and order maximizing the number of spanning trees were determined in \cite{X1}.
 Ehrenborg and van Willigenburg \cite{E1} introduced a class of bipartite graphs known as Ferrers graphs and provided an exact formula for their numbers of spanning trees. A central problem arising from their work, known as Ehrenborg's conjecture, asserted that for any connected bipartite graph $G$ with bipartition $V_1\cup V_2$, the inequality
\[
        \tau(G)\leq\frac{\prod_{v\in V(G)}d_{G}(v)}{|V_1||V_2|}
\]
holds. Ferrers graphs are known to attain this upper bound. Recently, Ho \cite{H1} completely resolved this conjecture, proving that the bound is valid and that equality occurs if and only if $G$ is a Ferrers graph.

Motivated by the above extremal problems for bipartite graphs and by the reliability interpretation of spanning trees, we study the spanning tree maximization problem in $\mathcal{B}(n,d)$. From the viewpoint of network reliability design, this asks which bipartite topology maximizes the number of connected spanning backbones under prescribed order and diameter constraints. Before presenting the main theorem, some preliminary concepts are required. Let $G$ be a graph with $V(G)=\{v_{1},v_{2},\ldots,v_{k}\}$, and let $H_1,H_2,\ldots,H_k$ be any $k$ graphs. The {\it generalized join graph} $G\circ(H_{1},H_{2},\ldots,H_{k})$ is obtained from $G$ by replacing each vertex~$v_{i}$ with a graph $H_{i}$ and joining each vertex in $H_{i}$ with each vertex in $H_{j}$ if $v_{i}v_{j}\in E(G)$. If each $H_{i}$ is an independent set of $n_{i}$ vertices, then we write $G\circ(H_{1},H_{2},\ldots,H_{k})$ as $G\circ(n_{1},n_{2},\ldots,n_{k})$, and call this graph a {\it blow-up} of $G$ with respect to the positive integer vector $(n_1,n_2,\ldots,n_k)$. Denote by  $K_{a,b}$  the complete bipartite graph with  bipartition $X\cup Y$, where $|X|=a$ and $|Y|=b$. For $d\ge4$ and $w=n-d-1\ge11$, the main difficulty is to show
that every extremal graph is a blow-up of a path whose integer vector has exactly three consecutive nontrivial parts, that is, parts of size greater than $1$.
Once this structure is established, the optimal sizes of these
parts can be determined by balancing arguments.
Our main theorems can now be stated as follows.

\begin{theorem}\label{main_th1}
If $G\in \mathcal{B}(n, d)$ with $d\in \{2,3\}$, then $G$ uniquely maximizes the number of spanning trees if and only if 
\begin{displaymath}
G\cong \begin{cases}
    K_{\lfloor \frac{n}{2} \rfloor, \lceil \frac{n}{2} \rceil}, &\mbox {\rm if $d=2$};\\
K_{\lfloor \frac{n}{2} \rfloor, \lceil \frac{n}{2} \rceil}', &\mbox {\rm if $d=3$},
   \end{cases}
\end{displaymath}
where $K_{\lfloor \frac{n}{2} \rfloor, \lceil \frac{n}{2} \rceil}'$ is the graph obtained from $K_{\lfloor \frac{n}{2} \rfloor, \lceil \frac{n}{2} \rceil}$ by deleting any edge.
\end{theorem}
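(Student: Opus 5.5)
The plan is to combine edge monotonicity of $\tau$ with explicit formulas for complete bipartite graphs and their one-edge deletions. Two facts will be used throughout. First, adding an edge to a connected graph strictly increases the number of spanning trees, since every old spanning tree survives and at least one new spanning tree uses the new edge. Second, $\tau(K_{a,b})=a^{b-1}b^{a-1}$, and for any edge $e$ of $K_{a,b}$,
\[
\tau(K_{a,b}-e)=\Bigl(1-\frac{a+b-1}{ab}\Bigr)a^{b-1}b^{a-1}=\frac{(a-1)(b-1)}{ab}\,a^{b-1}b^{a-1}.
\]
The second formula follows because each edge of $K_{a,b}$ lies in the same number of spanning trees, namely a fraction $(n-1)/(ab)$ of them, by edge-transitivity.

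\textbf{Case $d=2$.} A connected bipartite graph of diameter $2$ must be complete bipartite: two vertices in different parts are at odd distance, so if that distance is at most $2$ it equals $1$. Hence $G\cong K_{a,n-a}$ with $a\ge1$ and $n-a\ge2$. It remains to show that $f(a)=a^{n-a-1}(n-a)^{a-1}$ is uniquely maximized over $1\le a\le n-1$ at $a\in\{\lfloor n/2\rfloor,\lceil n/2\rceil\}$. Since the two balanced choices give isomorphic graphs, this yields uniqueness up to isomorphism. Setting $b=n-a$, for $a<b$ one compares $f(a+1)/f(a)$ and shows it exceeds $1$ whenever $a+1\le b-1$. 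This is equivalent to $\frac{(a+1)^{b-2}}{a^{b-1}}\cdot\frac{b^{a-1}}{(b-1)^{a}}>1$, and it reduces to comparing $(1+1/a)^{a}$-type expressions. Concretely, one shows that $g(x)=(\ln x)/(x-1)$-style log-concavity gives $\ln f$ as a strictly concave-in-effect symmetric function. I would first try proving that $\phi(a)=(b-1)\ln a+(a-1)\ln b$, viewed along $a+b=n$, has a strictly positive discrete forward difference for $a<b-1$, using $\ln(1+1/a)>1/(a+1)$ and $\ln(1+1/(b-1))<1/(b-1)$. This unimodality computation is the main obstacle, though it is routine. Note also that $K_{1,n-1}$ is allowed and gives $1$, which is clearly not maximal once $n\ge4$.

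\textbf{Case $d=3$.} Let $G$ have bipartition $X\cup Y$ with $|X|=a$ and $|Y|=b$. Diameter $3$ means $G\ne K_{a,b}$ but every same-part pair is at distance $2$. Since $G$ is a proper spanning subgraph of $K_{a,b}$, edge monotonicity lets us add missing edges one at a time while keeping the diameter equal to $3$, until exactly one edge is missing. Adding an edge never increases distances, and the diameter stays $3$ until the graph becomes complete. This strictly increases $\tau$, so a maximizer must be $K_{a,b}-e$. Such a graph has diameter exactly $3$ as long as $a,b\ge2$, and by edge-transitivity all choices of $e$ give isomorphic graphs.

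Finally, we maximize $h(a)=(a-1)(b-1)a^{b-2}b^{a-2}$ over $a+b=n$ with $a,b\ge2$. Here $h(a)=f(a)\cdot\frac{(a-1)(b-1)}{ab}$. Both factors are symmetric under $a\leftrightarrow b$ and strictly increase as $|a-b|$ decreases: the first by the $d=2$ computation, and the second because $(1-1/a)(1-1/b)$ has logarithm $\ln(1-1/x)$, which is concave in $x$, so its symmetric sum increases toward balance. Thus $h$ is uniquely maximized at the balanced split, giving $K'_{\lfloor n/2\rfloor,\lceil n/2\rceil}$. Conversely, any other graph in $\mathcal B(n,3)$ is either unbalanced or has at least two missing edges, and so has strictly fewer spanning trees. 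This gives the uniqueness claimed in Theorem~\ref{main_th1}.
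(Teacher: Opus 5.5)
Your proposal is correct. It has the same skeleton as the paper's proof: diameter $2$ forces $K_{a,b}$; for diameter $3$, edge monotonicity (Lemma~\ref{lem1}) forces $K_{a,b}-e$ with $a,b\ge 2$; then a balancing argument. You carry out the two technical steps differently, though.

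\begin{itemize}
\item You get $\tau(K_{a,b}-e)=\frac{(a-1)(b-1)}{ab}\,\tau(K_{a,b})$ from edge-transitivity. The paper instead uses the generalized join formula with $K'_{a,b}\cong P_4\circ(1,a-1,b-1,1)$. Your route is more elementary and self-contained.
\item For balancing, the paper passes to the continuous variable $x=|a-b|/2$ and observes that $f'(x)$ is a sum of manifestly negative terms, which is essentially a one-line check. Your discrete forward difference costs a little inequality bookkeeping.
\item For $d=3$, your factorization $h=f\cdot\frac{(a-1)(b-1)}{ab}$, with strict concavity of $\ln(1-1/x)$, reduces the problem to $d=2$ and avoids a second computation. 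The paper's $d=3$ derivative has the same structure: it is the $d=2$ derivative plus the derivative of $\ln\frac{(a-1)(b-1)}{ab}$.
\end{itemize}

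There are two things to fix in the $d=2$ step.

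First, your displayed ratio has its second factor inverted. The correct ratio is $f(a+1)/f(a)=\frac{(a+1)^{b-2}}{a^{b-1}}\cdot\frac{(b-1)^{a}}{b^{a-1}}$. The inequality as you wrote it is false in general: for $a=2$, $b=6$ it equals $486/800<1$.

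Second, you call the forward-difference step ``routine'' but leave it undone. It does close with exactly the inequalities you name. For $1\le a\le b-2$,
\[
\phi(a+1)-\phi(a)=(b-2)\ln\Bigl(1+\frac1a\Bigr)-(a-1)\ln\Bigl(1+\frac1{b-1}\Bigr)+\ln\frac{b-1}{a}>\frac{b-2}{a+1}-\frac{a-1}{b-1}\ge\frac{a}{a+1}-\frac{a-1}{a+1}>0,
\]
using $\ln\frac{b-1}{a}>0$, $b-2\ge a$ and $b-1\ge a+1$. The vague sentence about ``$(\ln x)/(x-1)$-style log-concavity'' adds nothing and can be dropped.
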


\begin{theorem}\label{main_th2}
If $G\in \mathcal{B}(n,d)$ with $d\geq4$ and  $w=n-d-1\ge 11$, then
\begin{equation*}
\begin{split}
\tau(G)\leq
\left(\left\lfloor\frac{w+1}{4}\right\rfloor+1\right)
\left(\left\lceil\frac{w}{4}\right\rceil+1\right)
\left(\left\lfloor\frac{w}{2}\right\rfloor+1\right)
\left(\left\lfloor\frac{w}{2}\right\rfloor+2\right)^{
\left\lceil\frac{w}{2}\right\rceil}
\left(\left\lceil\frac{w}{2}\right\rceil+2\right)^{
\left\lfloor\frac{w}{2}\right\rfloor}.
\end{split}
\end{equation*}
The equality holds if and only if
\[
G\cong P_{d+1}\circ(n_1,n_2,\ldots,n_{d+1}),
\]
where there exists an index $i$ with $3\leq i\leq d-1$ such that $n_i=\left\lfloor\frac{w}{2}\right\rfloor+1$,
$\{n_{i-1},n_{i+1}\}=\left\{\left\lfloor\frac{w+1}{4}\right\rfloor+1,\left\lceil\frac{w}{4}\right\rceil+1\right\}$
and
$n_j=1$ for all $j \notin\{i-1,i,i+1\}$.
\end{theorem}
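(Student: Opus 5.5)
Let $G\in\mathcal{B}(n,d)$ be extremal. The plan has three stages.

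\textbf{Stage 1: reduce to a blow-up of a path.} Fix a diametral pair $u,v$ and split $V(G)$ into distance layers $L_0=\{u\},L_1,\ldots,L_d$ from $u$. Since $G$ is bipartite, every edge joins consecutive layers. By Rayleigh monotonicity, adding an edge strictly increases $\tau$. So if an edge can be added between $L_j$ and $L_{j+1}$ without changing the diameter or creating an odd cycle, $G$ is not extremal. Adding all such edges makes every pair of consecutive layers completely joined, which keeps the graph bipartite and keeps the diameter equal to $d$. Hence $G\cong P_{d+1}\circ(n_1,\ldots,n_{d+1})$ with $n_1=1$ and $\sum n_i=n$.

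The same argument with $v$ as root suggests $n_{d+1}=1$. More precisely, if $n_{d+1}>1$, moving a vertex from $L_d$ to $L_{d-1}$ keeps the diameter and should increase $\tau$.

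\textbf{Stage 2: explicit formula.} For a path blow-up, every tree must connect the consecutive independent sets. A standard count, via the Laplacian and the equitable partition (or by contracting layers), gives
\[
\tau\bigl(P_{d+1}\circ(n_1,\ldots,n_{d+1})\bigr)=\prod_{i=1}^{d} n_i n_{i+1}\cdot\prod_{i=1}^{d+1}(n_{i-1}+n_{i+1})^{n_i-1}\cdot\frac{1}{\prod_{i} (\text{correction})},
\]
with the convention $n_0=n_{d+2}=0$. I would derive it cleanly via the Matrix-Tree theorem. Each vertex in class $i$ contributes an eigenvalue $n_{i-1}+n_{i+1}$ of multiplicity $n_i-1$. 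The remaining quotient $(d+1)\times(d+1)$ tridiagonal part is computed by eliminating leaves inductively, and I expect it to give $\prod_{i=1}^{d} n_in_{i+1}/n$-type factors that combine into $\prod_{i=1}^{d}n_in_{i+1}$ divided by products of the $n_i$. I would verify that the formula reproduces the claimed bound on the configuration $(\ldots,1,a,b,c,1,\ldots)$, namely
\[
ac\,(a+c)^{b-1}\,(b+1)^{a-1}(b+1)^{c-1}\cdots,
\]
with the two boundary classes of size $1$ adjacent to $a$ and $c$ contributing factors.

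\textbf{Stage 3: optimize the vector.} This is the main obstacle: showing that exactly three consecutive indices carry nontrivial parts, and that they sit strictly inside, so $3\le i\le d-1$. I would use local transfer moves.

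First, if two nontrivial parts are separated by at least one index, move vertices from the smaller-impact part toward the other. Compare $\tau$ before and after through ratio estimates of the form $(1+1/x)^x$ bounds. This is where the hypothesis $w\ge 11$ enters, since the ratio inequalities fail for small $w$.

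Second, show that a block of four or more consecutive nontrivial parts can be compressed into three.

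Third, show that a block placed at the end, touching $n_1$ or $n_{d+1}$, loses a factor.

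With three parts $a,b,c$ satisfying $a+b+c=w+3$, the objective becomes roughly $ac(a+c)^{b-1}(b+2)^{a+c-2}\cdot(\cdot)$. Balancing gives $a+c\approx b$ and $a\approx c$. Checking discrete neighbors then gives $b=\lfloor w/2\rfloor+1$ and $\{a,c\}=\{\lfloor (w+1)/4\rfloor+1,\lceil w/4\rceil+1\}$.

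Uniqueness follows because every move used above is strictly improving.
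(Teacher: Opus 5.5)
\textbf{The gap is Stage 3.} It is essentially the whole content of the theorem, and your plan gives no mechanism for it.

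Once $n_1=n_{d+1}=1$, the nontrivial parts form one consecutive block. This works for every $w$: permute the vector so that a run of $1$'s between two nontrivial parts moves to the front. The two flanking nontrivial parts become adjacent, and each of their neighbour sums strictly grows. With $x_j=n_j-1$ on the block, you must then maximize
$\varphi(x_1,\ldots,x_r)=\prod_{i=1}^r(1+x_i)(2+x_{i-1}+x_{i+1})^{x_i}$, with $x_0=x_{r+1}=0$ and $\sum x_i=w$.

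The hard step is excluding $r\ge 4$. ``Compress four or more parts into three'' by local moves with $(1+1/x)^x$-type bounds cannot be taken for granted. For $w=10$ one has $\varphi(1,4,4,1)=3600\cdot 7^8>3528\cdot 7^8=\varphi(3,5,2)$, so for $d\ge 5$ the maximizer is the four-part vector $(2,5,5,2)$. Any compression argument must therefore be tight exactly at $w=11$. This is where $w\ge 11$ is really used, not in the separated-parts step you point to.

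The paper rules out $r=4$ with a specific merge $(a,b,c,h)\mapsto(p,b+h,q)$, where $p+q=a+c$ is balanced, followed by a careful estimate. For $r\ge 5$ it uses no move at all but a global bound:
\begin{itemize}
\item Jensen gives $\sum x_i\ln(2+x_{i-1}+x_{i+1})\le w\ln(2+2Q/w)$ with $Q=\sum x_ix_{i+1}$.
\item $Q$ is bounded in terms of $r$ and $w$.
\item $\prod(1+x_i)$ is bounded by AM--GM after pairing nonadjacent odd/even entries, using a Hall-type matching when $7\le r\le w/4$.
\item The result is compared with the explicit lower bound $\ln\varphi>(w+3)\ln(\frac{w}{2}+2)-\ln 4-\frac{1}{4}$ for the balanced three-part vector.
\item A finite computer check covers $11\le w\le 27$ with $r>w/4$.
\end{itemize}
Without some replacement for this, your conclusion that the optimum has exactly three parts is unproved, and so is uniqueness.

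\textbf{Concrete errors in the parts you did write.}
\begin{itemize}
\item In Stage 1, moving a vertex from $L_d$ to $L_{d-1}$ does not increase $\tau$ in general. The tail $(\ldots,1,3,2)\to(\ldots,1,4,1)$ turns a $K_{3,3}$ into a $K_{2,4}$, and $\tau$ drops from $81$ to $32$. The correct move is to $L_{d-2}$, whose neighbourhood $L_{d-3}\cup L_{d-1}$ contains $L_{d-1}$. Your re-rooting remark, done carefully, yields exactly this move.
\item In Stage 2 the correction factor is $\prod_{i=1}^{d+1}n_i$. This gives $\tau=\prod_{i=2}^{d}n_i(n_{i-1}+n_{i+1})^{n_i-1}$ when $n_1=n_{d+1}=1$, for example via the Zhou--Bu formula for generalized joins. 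For $(\ldots,1,a,b,c,1,\ldots)$ this is $abc\,(a+c)^{b-1}(b+1)^{a+c-2}$, so your spot-check drops the factor $b$.
\item Your Stage 3 objective, with $(b+2)^{a+c-2}$ and $a+b+c=w+3$, mixes the $n$- and $x$-parametrizations.
\end{itemize}

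The final balancing for three parts is fine in outline: $|a-c|\le 1$, then $0\le(a+c)-b\le 1$ in the $x$-variables. Your ``third'' point is unnecessary, since $n_1=n_{d+1}=1$ already forces $3\le i\le d-1$.
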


\section{Preliminaries}
We write $\mathbb{N}^+=\{1,2,3,\ldots\}$ and
$[k]=\{1,2,\ldots,k\}$ for every $k\in\mathbb{N}^+$. Suppose that $H$ is a graph with $V(H)=\{v_1,v_2,\ldots,v_n\}$. The {\it Laplacian matrix} $L_{H}$ of $H$ is the $n \times n$ symmetric matrix with entries
\begin{displaymath}
(L_{H})_{ij}=
   \begin{cases}
  d_H(v_i), &\mbox {\rm if $v_i=v_j$},\\
 -1, &\mbox {\rm if $v_iv_j\in E(H)$},\\
 0,  &\mbox {\rm otherwise}.
   \end{cases}
\end{displaymath}
Eigenvalues of $L_H$ are called the {\it Laplacian eigenvalues} of $H$. Let $\mu_1(H) \geq \cdots \geq \mu_{n-1}(H) \geq \mu_{n}(H)=0$ be all the Laplacian eigenvalues  of $H$.

Let $G$ be a graph with $k$ vertices, and suppose that $H_1, H_2, \ldots, H_k$ are graphs. Zhou and Bu \cite{Z1} gave a formula for counting spanning trees in $G\circ(H_{1},H_{2}, \ldots, H_{k})$.

\begin{theorem}[\hspace{1sp}{\cite{Z1}}]\label{theorem2.1}
Let $G_{0}=G\circ(H_{1},H_{2}, \ldots, H_{k})$ be a generalized join graph of a connected graph $G$ with $n_{i}=|V(H_{i})|$ for $i\in [k]$. Then
\begin{displaymath}
\tau(G_0)=\prod_{v_{i}\in V(G)}\frac{\prod_{\ell=1}^{n_{i}-1}\left(\mu_{\ell}(H_{i})+\sum_{v_{j}\in N_{G}(v_{i})}n_{j}\right)}{n_{i}}\sum_{T\in \mathcal{T}(G)}\prod_{v_{i}v_{j}\in E(T)}n_{i}n_{j}.
\end{displaymath}
\end{theorem}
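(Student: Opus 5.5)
The statement immediately above is Theorem~\ref{theorem2.1}, the Zhou--Bu formula. Our plan is to prove it with the matrix-tree theorem, by block-diagonalizing the Laplacian of $G_0$ along the vertex classes $V(H_i)$.

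First, order the vertices of $G_0$ class by class. Write $s_i=\sum_{v_j\in N_G(v_i)}n_j$. The diagonal block of $L_{G_0}$ for class $i$ is then $L_{H_i}+s_iI_{n_i}$. The off-diagonal block between classes $i$ and $j$ is $-J_{n_i\times n_j}$ if $v_iv_j\in E(G)$, and $0$ otherwise.

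Second, split each $\mathbb{R}^{n_i}$ into $\mathbf{1}$ and $\mathbf{1}^\perp$. Since $L_{H_i}\mathbf{1}=0$ and $J\mathbf{x}=0$ for $\mathbf{x}\perp\mathbf{1}$, any Laplacian eigenvector $\mathbf{x}$ of $H_i$ orthogonal to $\mathbf{1}$, padded by zeros on the other classes, is an eigenvector of $L_{G_0}$. Its eigenvalue is $\mu_\ell(H_i)+s_i$. This gives $\sum_i (n_i-1)$ eigenvalues, namely $\mu_\ell(H_i)+s_i$ for $\ell=1,\dots,n_i-1$.

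The remaining $k$ eigenvalues live on the span of the class indicator vectors $\mathbf{1}_{V(H_i)}$. That span is invariant, and on it $L_{G_0}$ acts by the $k\times k$ matrix $M$ with $M_{ii}=s_i$ and $M_{ij}=-n_j$ for $v_iv_j\in E(G)$. Thus $M=L_wN^{-1}$-type, where $N=\mathrm{diag}(n_i)$ and $L_w$ is the weighted Laplacian of $G$ with edge weights $n_in_j$. More precisely, $M=L_wN^{-1}$ after checking: $(L_wN^{-1})_{ij}=-n_in_j/n_j=-n_i$. So the correct form is $M^{T}$, or equivalently $M=N^{-1}L_w$. Either way, $M$ is similar to $N^{-1/2}L_wN^{-1/2}$, which is symmetric with eigenvalues $0=\lambda_k\le\dots$. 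This confirms that $0$ appears exactly once (as $G$ is connected) and that no multiplicity is lost in the decomposition.

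Third, apply the matrix-tree theorem in eigenvalue form, $\tau(G_0)=\frac1{|V(G_0)|}\prod_{\mu\neq0}\mu$. This gives
\[
\tau(G_0)=\frac{1}{\sum_i n_i}\prod_i\prod_{\ell=1}^{n_i-1}\bigl(\mu_\ell(H_i)+s_i\bigr)\cdot \prod_{\lambda\neq0}\lambda(M).
\]
The product of the nonzero eigenvalues of $M$ equals the sum of its principal $(k-1)$-minors. For $N^{-1/2}L_wN^{-1/2}$, the principal minor deleting index $r$ is $\det(L_w^{(r)})\prod_{i\neq r}n_i^{-1}$. By the weighted matrix-tree theorem, $\det(L_w^{(r)})=\sum_{T\in\mathcal T(G)}\prod_{v_iv_j\in E(T)}n_in_j=:W$ for every $r$. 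Summing over $r$ gives
\[
\prod_{\lambda\neq0}\lambda(M)=W\sum_r\frac{n_r}{\prod_i n_i}=W\frac{\sum_r n_r}{\prod_i n_i}.
\]
The factor $\sum_r n_r$ cancels the $1/|V(G_0)|$, which leaves exactly the claimed formula.

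The main obstacle is the bookkeeping in the second step: showing that the invariant subspaces give a complete eigenbasis with the correct multiplicities. The asymmetry of $M$ is what makes this delicate. We handle it through the similarity to the symmetric matrix $N^{-1/2}L_wN^{-1/2}$.
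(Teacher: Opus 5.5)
The paper does not prove Theorem~\ref{theorem2.1}. It quotes the result from Zhou and Bu and uses it only in the special case of edgeless $H_i$ (Corollary~\ref{cor1}), so there is no in-paper argument to compare yours with. Your spectral proof is correct and self-contained. The class-wise copies of $\mathbf{1}^\perp$ and the span of the class indicator vectors are complementary (indeed orthogonal) invariant subspaces of $L_{G_0}$, so
\[
\det(xI-L_{G_0})=\prod_{i=1}^{k}\prod_{\ell=1}^{n_i-1}\bigl(x-\mu_\ell(H_i)-s_i\bigr)\cdot\det(xI-M).
\]
Your final identification $M=N^{-1}L_w$ is right; the intermediate sentence about $L_wN^{-1}$ should simply be deleted. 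The principal minors are computed correctly via the weighted matrix-tree theorem, and $\sum_r n_r$ cancels $|V(G_0)|$ as you say.

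The one imprecision is the form of the matrix-tree theorem you invoke. The identity $\tau(G_0)=\frac{1}{|V(G_0)|}\prod_{\mu\neq0}\mu$ holds only for connected graphs. In general, $|V(G_0)|\,\tau(G_0)$ is the product of all Laplacian eigenvalues with one copy of $0$ removed.

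For $k\ge2$ this changes nothing. Every $s_i\ge1$, so all eigenvalues $\mu_\ell(H_i)+s_i$ are positive, and $M$ carries the unique zero; you should state this explicitly when passing to your displayed formula. For $k=1$, where $G_0=H_1$ may be disconnected, your displayed formula is the correct one, but it does not follow from the version you stated.

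You can avoid both this issue and the ``delicate'' multiplicity bookkeeping by comparing coefficients of $x$ in the factorization above. Write $n=|V(G_0)|$.
\begin{itemize}
\item The coefficient of $x$ in $\det(xI-L_{G_0})$ is $(-1)^{n-1}n\,\tau(G_0)$.
\item $\det(xI-M)$ has zero constant term, since $M\mathbf{1}=0$.
\item The coefficient of $x$ in $\det(xI-M)$ is $(-1)^{k-1}$ times the sum of the principal $(k-1)$-minors of $M$.
\end{itemize}
Together these give $n\,\tau(G_0)=\prod_{i,\ell}(\mu_\ell(H_i)+s_i)\sum_r\det M^{(r)}$ directly. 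This route needs neither the simplicity of the zero eigenvalue nor the similarity to $N^{-1/2}L_wN^{-1/2}$.
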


Denote by $P_d$ the path of order $d$. Using Theorem \ref{theorem2.1}, we obtain the following formula for the number of spanning trees in the blow-up graph $P_{d+1}\circ(n_1,n_2, \ldots, n_{d+1})$.
\begin{corollary}\label{cor1}
For any positive integer vector $(n_1, n_2, \ldots, n_{d+1})$, we have
\begin{displaymath}
\tau\bigl(P_{d+1}\circ(n_1,n_2,\ldots,n_{d+1})\bigr) = n_{2}^{n_{1}-1} \cdot \left( \prod_{i=2}^{d} (n_{i-1}+n_{i+1})^{n_{i}-1} n_{i} \right) \cdot n_{d}^{n_{d+1}-1}.
\end{displaymath}
\end{corollary}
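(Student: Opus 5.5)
We apply Theorem \ref{theorem2.1} with $G=P_{d+1}$ and each $H_i=\overline{K_{n_i}}$, the edgeless graph on $n_i$ vertices. Three ingredients are needed: the Laplacian eigenvalues of each $H_i$, the neighbourhood sums in the path, and the weighted sum over spanning trees of $P_{d+1}$.

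First, since $H_i$ has no edges, all its Laplacian eigenvalues vanish. In particular $\mu_\ell(H_i)=0$ for $\ell\in[n_i-1]$.

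Second, label the path $v_1v_2\cdots v_{d+1}$. Then the neighbourhood sums are:
\[
\sum_{v_j\in N_{P_{d+1}}(v_1)}n_j=n_2,\qquad \sum_{v_j\in N_{P_{d+1}}(v_{d+1})}n_j=n_d,
\]
\[
\sum_{v_j\in N_{P_{d+1}}(v_i)}n_j=n_{i-1}+n_{i+1}\quad\text{for }2\le i\le d.
\]
Hence the product of per-vertex factors in Theorem \ref{theorem2.1} equals
\[
\frac{n_2^{n_1-1}}{n_1}\cdot\prod_{i=2}^{d}\frac{(n_{i-1}+n_{i+1})^{n_i-1}}{n_i}\cdot\frac{n_d^{n_{d+1}-1}}{n_{d+1}}.
\]

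Third, a path is a tree, so $\mathcal{T}(P_{d+1})$ consists only of $P_{d+1}$ itself. The sum over spanning trees is therefore the single term
\[
\prod_{i=1}^{d}n_in_{i+1}.
\]
In this product, each endpoint index $1$ and $d+1$ appears exactly once, and each internal index $2\le i\le d$ appears exactly twice. So it equals $n_1n_{d+1}\prod_{i=2}^d n_i^2$.

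Finally, multiply the two expressions. The denominators $n_1$ and $n_{d+1}$ cancel completely. Each internal index keeps one factor $n_i$, since $n_i^2/n_i=n_i$. This gives exactly
\[
n_2^{n_1-1}\Bigl(\prod_{i=2}^{d}(n_{i-1}+n_{i+1})^{n_i-1}n_i\Bigr)n_d^{n_{d+1}-1}.
\]
There is no real obstacle here. The only point requiring care is the bookkeeping of the edge-weight exponents at the two endpoints versus the internal vertices.
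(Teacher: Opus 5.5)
Your proposal is correct and follows essentially the same route as the paper: you apply Theorem~\ref{theorem2.1} with edgeless $H_i$ (so all $\mu_\ell(H_i)=0$). You then use that $P_{d+1}$ is its own unique spanning tree, which makes the weighted sum $n_1n_{d+1}\prod_{i=2}^{d}n_i^2$, and finish by cancelling against the denominators.
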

\begin{proof}
Let $G_0=P_{d+1}\circ(n_1,n_2,\ldots,n_{d+1})$. Note that $G_0=P_{d+1}\circ(H_1,H_2,\ldots,H_{d+1})$, where each $H_{i}$ is an independent set of $n_{i}$ vertices. Then $\mu_{\ell}(H_{i})=0$ for $1\leq\ell\leq n_i-1$ and $1\leq i\leq d+1$.  By Theorem \ref{theorem2.1}, we have
\begin{displaymath}
\begin{split}
\tau(G_0)&=\frac{(n_{2})^{n_{1}-1}(n_{d})^{n_{d+1}-1}}{n_1n_{d+1}}\prod_{i=2}^{d}\frac{ (n_{i-1}+n_{i+1})^{n_{i}-1}}{n_i} \sum_{T\in \mathcal{T}(P_{d+1})}\prod_{v_{i}v_{j}\in E(T)}n_{i}n_{j}\\
&=\frac{(n_{2})^{n_{1}-1}(n_{d})^{n_{d+1}-1}}{n_1n_{d+1}}\prod_{i=2}^{d}\frac{ (n_{i-1}+n_{i+1})^{n_{i}-1}}{n_i}n_1n_{d+1}\prod_{i=2}^{d} n_{i}^{2}\\
&=n_{2}^{n_{1}-1} \cdot \left( \prod_{i=2}^{d} (n_{i-1}+n_{i+1})^{n_{i}-1}  n_{i} \right) \cdot n_{d}^{n_{d+1}-1}.
\end{split}
\end{displaymath}
This completes the proof.
\end{proof}

 Denote by $\overline{G}$ the complement of  $G$. For a graph $G$ and any edge $e\in E(\overline{G})$, let $G+e$ be the graph obtained from $G$ by adding the edge $e$.   The following lemma is obvious.

\begin{lemma}\label{lem1}
Let $G$ be a noncomplete connected graph with $e\in E(\overline{G})$. Then we have $\tau(G) < \tau(G+e)$.
\end{lemma}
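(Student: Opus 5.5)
The idea is to build an injection from $\mathcal{T}(G)$ into $\mathcal{T}(G+e)$ that is not surjective. The inclusion map does this. Every spanning tree of $G$ is a spanning subgraph of $G+e$ that is connected and acyclic with $n-1$ edges. So it is also a spanning tree of $G+e$, and distinct trees stay distinct. This gives $\tau(G)\le\tau(G+e)$.

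For the strict inequality, it suffices to exhibit one spanning tree of $G+e$ that contains $e$, since such a tree cannot lie in $\mathcal{T}(G)$. Write $e=uv$. Start with any spanning tree $T$ of $G$, which exists because $G$ is connected. Then $T+e$ contains a unique cycle $C$, and this cycle passes through $e$. Since $C$ is a cycle, it has length at least $3$, so it contains an edge $f\ne e$. The graph $T+e-f$ is connected, because removing an edge of a cycle does not disconnect. It also has $n-1$ edges, so it is a spanning tree of $G+e$ containing $e$. Hence $\tau(G+e)\ge\tau(G)+1$.

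No step here is a genuine obstacle. The only points to check are that $G$ being connected guarantees $T$ exists, and that the cycle has an edge other than $e$. The hypothesis that $G$ is noncomplete is only needed so that an edge $e\in E(\overline{G})$ exists. Alternatively, one could use the deletion–contraction identity $\tau(G+e)=\tau(G)+\tau((G+e)/e)$ and note that $(G+e)/e$ is connected, so $\tau((G+e)/e)\ge1$. I would present the direct exchange argument above.
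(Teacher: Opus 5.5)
Your proof is correct: the inclusion $\mathcal{T}(G)\subseteq\mathcal{T}(G+e)$, together with the spanning tree $T+e-f$ of $G+e$ that uses $e$, gives $\tau(G+e)\ge\tau(G)+1$. The paper offers no proof (it simply calls the lemma obvious), and your exchange argument is the standard justification it implicitly relies on.
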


Let $G$ be a graph with vertex set $V(G)=\{v_{1},v_{2},\ldots,v_{k}\}$. Note that the blow-up graph $G_0=G\circ(n_1,n_2,\ldots,n_{k})$ has a vertex partition $V(G_0)=V_{1} \sqcup V_{2} \sqcup \cdots \sqcup V_{k}$,  where $|V_{i}|=n_{i}$ and $V_{i}=\{v_{i}^{1},v_{i}^{2},\ldots,v_{i}^{n_{i}}\}$ for $1\leq i\leq k$. In order to find the graph with the maximum number of spanning trees in blow-up graphs, we need the following result.

\begin{lemma}\label{lem2}
Let $G$ be a connected graph with $k$ vertices and $(n_{1},n_{2},\ldots,n_{k})$ be a positive integer vector. For  $i, j \in [k]$, if $v_{i}v_{j}\notin E(G)$, $N_{G}(v_{i})\subset N_{G}(v_{j})$ and $n_i\geq 2$, then
\begin{equation*}
\begin{split}
\tau\bigl(G\circ(n_{1},\ldots,n_{i},\ldots,n_{j},\ldots,n_{k})\bigr)
<\tau\bigl(G\circ(n_{1},\ldots,n_{i}-1,\ldots,n_{j}+1,\ldots,n_{k})\bigr).
\end{split}
\end{equation*}
\end{lemma}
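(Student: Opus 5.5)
Since $v_iv_j\notin E(G)$ and $N_G(v_i)\subset N_G(v_j)$, I will realize the move of one vertex from $V_i$ to $V_j$ as an edge-addition operation and then invoke Lemma~\ref{lem1}. Let $G_0=G\circ(n_1,\ldots,n_k)$ and pick $x=v_i^{n_i}\in V_i$; this is possible because $n_i\ge 2$. In $G_0$ the neighbourhood of $x$ is $\bigcup_{v_\ell\in N_G(v_i)}V_\ell$. The neighbourhood of any vertex $y\in V_j$ is $\bigcup_{v_\ell\in N_G(v_j)}V_\ell$.

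Consider the graph $G_1$ obtained from $G_0$ by joining $x$ to every vertex of $\bigcup_{v_\ell\in N_G(v_j)\setminus N_G(v_i)}V_\ell$. In $G_1$, $x$ has exactly the neighbourhood of the vertices of $V_j$. Also, $x$ is not adjacent to $V_j$ or to the remaining vertices of $V_i$, because $v_iv_j\notin E(G)$ and $v_j\notin N_G(v_j)$. Note that $v_i\notin N_G(v_j)$ as well, by symmetry of non-adjacency, so no loop-type or $V_i$-internal edges are added. Therefore the vertex set $V_j\cup\{x\}$ is an independent set whose members have identical neighbourhoods, and the remaining part $V_i\setminus\{x\}$ still has size $n_i-1\ge 1$. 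This shows $G_1\cong G\circ(n_1,\ldots,n_i-1,\ldots,n_j+1,\ldots,n_k)$.

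The inclusion $N_G(v_i)\subset N_G(v_j)$ is strict, so there is some $v_\ell\in N_G(v_j)\setminus N_G(v_i)$. Since $n_\ell\ge1$, at least one edge is added. The graph $G_0$ is connected, because $G$ is connected and all $n_t\ge1$. Adding the edges one at a time and applying Lemma~\ref{lem1} at each step gives strict increase, since every intermediate graph is connected and noncomplete (it still misses the edge between $x$ and another vertex of $V_i$). Hence $\tau(G_0)<\tau(G_1)$.

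The only point requiring care is verifying the isomorphism $G_1\cong$ the new blow-up. Specifically, one must check that $x$ acquires no edges to $V_i\cup V_j$ and that $x$ keeps its original edges. These original edges remain valid after the move because $N_G(v_i)\subseteq N_G(v_j)$. I would also remark that if $n_i=1$ the part $V_i$ would vanish, which is exactly why the hypothesis $n_i\ge2$ is needed.
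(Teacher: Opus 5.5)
Your proof is correct and follows the same route as the paper: the paper observes that $G\circ(n_1,\ldots,n_i,\ldots,n_j,\ldots,n_k)$ is the new blow-up with the edges from the moved vertex to $\bigcup_{v_\ell\in N_G(v_j)\setminus N_G(v_i)}V_\ell$ deleted, and then applies Lemma~\ref{lem1}. You phrase this as adding those edges to $x$ instead of deleting them. You also spell out the isomorphism check and the strictness, which the paper leaves implicit.
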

\begin{proof}
Observe that
\begin{equation*}
\begin{split}
G\circ(n_{1},\ldots,n_{i},\ldots,n_{j},\ldots,n_{k})
={} & G\circ(n_{1},\ldots,n_{i}-1,\ldots,n_{j}+1,\ldots,n_{k})\\
&-\bigl\{v_{j}^{n_{j}+1}v_{\ell}^{r}:1\leq r\leq n_{\ell}\text{ and }v_{\ell}\in N_{G}(v_{j}) \setminus N_{G}(v_{i})\bigr\}.
\end{split}
\end{equation*}
Then the result follows from Lemma \ref{lem1}.
\end{proof}

Next we present several inequalities that will be used in the proofs of the main results.
\begin{lemma}\label{xsh}
The following inequalities hold.
\begin{enumerate}
\item[\normalfont(1)] If $m\in \mathbb{N}^+$ and $x\geq -1$, then
$(1+x)^m\geq 1+mx$.
\item[\normalfont(2)] If $0\le x<1$, then
$\ln(1-x)\leq -x$.
\item[\normalfont(3)] If $0\le x<1$, then
$\ln(1-x)\leq -x-\frac{x^2}{2}$.
\item[\normalfont(4)] If $x\geq0$, then
$\ln(1+x)\leq x-\frac{x^2}{2(1+x)}$.
\end{enumerate}
\end{lemma}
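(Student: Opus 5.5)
The four inequalities are standard, and I would prove each by elementary calculus or induction, treating them independently.

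For (1), Bernoulli's inequality, I would induct on $m$. The case $m=1$ is an equality. Assuming $(1+x)^m\ge 1+mx$, multiply both sides by $1+x\ge 0$; this is where $x\ge -1$ is needed. This gives
\[
(1+x)^{m+1}\ge (1+mx)(1+x)=1+(m+1)x+mx^2\ge 1+(m+1)x .
\]

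For (2) and (3), I would set $f(x)=\ln(1-x)+x$ and $g(x)=\ln(1-x)+x+\frac{x^2}{2}$ on $[0,1)$. Both vanish at $x=0$. Their derivatives are
\[
f'(x)=-\frac{1}{1-x}+1=-\frac{x}{1-x}\le 0,\qquad g'(x)=-\frac{1}{1-x}+1+x=-\frac{x^2}{1-x}\le 0 .
\]
So both functions are nonincreasing, hence $f,g\le 0$ on $[0,1)$. Alternatively, (2) follows from (3) since $-\frac{x^2}{2}\le 0$, so it suffices to prove (3). One could also use the series $\ln(1-x)=-\sum_{k\ge1}x^k/k$, all of whose terms are nonpositive, but the derivative argument avoids convergence remarks.

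For (4), I would let $h(x)=x-\frac{x^2}{2(1+x)}-\ln(1+x)$ for $x\ge0$, so $h(0)=0$. The derivative of $\frac{x^2}{2(1+x)}$ is $\frac{2x(1+x)-x^2}{2(1+x)^2}=\frac{x^2+2x}{2(1+x)^2}$. Hence
\[
h'(x)=1-\frac{x^2+2x}{2(1+x)^2}-\frac{1}{1+x}=\frac{2(1+x)^2-x^2-2x-2(1+x)}{2(1+x)^2}=\frac{x^2}{2(1+x)^2}\ge 0 .
\]
Therefore $h$ is nondecreasing and $h(x)\ge h(0)=0$, which is (4).

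There is no real obstacle here. The only step needing care is the algebraic simplification of $h'$ in (4): expanding $2(1+x)^2-x^2-2x-2-2x$ gives exactly $x^2$.
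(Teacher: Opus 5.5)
Your proof is correct and takes essentially the same approach as the paper, which cites Bernoulli's inequality for (1) and says (2)--(4) can be verified directly by differentiation. You carry that out explicitly, and each derivative computation checks out, including $h'(x)=\frac{x^2}{2(1+x)^2}$ in (4).
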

\begin{proof}
Inequality {\rm (1)} is Bernoulli's inequality, while the other inequalities can be verified directly by differentiation.
\end{proof}

\begin{lemma}\label{lem:T3lower}
 Let $\beta=\left\lfloor\frac w2\right\rfloor$, $\alpha=\left\lfloor\frac{w-\beta}{2}\right\rfloor$ and $\gamma=w-\alpha-\beta$, where $w\geq11$ is an integer. Define two functions
\begin{align*}
T_3(w) &\coloneqq
(1+\alpha)(1+\beta)(1+\gamma)
(\beta+2)^{\alpha+\gamma}
(\alpha+\gamma+2)^\beta;\\
g(w) &\coloneqq
(w+3)\ln\left(\frac w2+2\right)-\ln4-\frac14.
\end{align*}
Then $\ln T_3(w)>g(w)$.

\end{lemma}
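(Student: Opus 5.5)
The plan is to compare $\ln T_3(w)$ with $(w+3)\ln x$, where $x\coloneqq\frac w2+2$ (so $x\ge 7.5$). First I rewrite $T_3$ in terms of $\beta=\lfloor w/2\rfloor$ and $c\coloneqq\alpha+\gamma=\lceil w/2\rceil$. Then
\[
T_3(w)=(1+\alpha)(1+\gamma)(1+\beta)\,(\beta+2)^{c}(c+2)^{\beta},
\qquad c+\beta=w .
\]
Since $\alpha=\lfloor c/2\rfloor$ and $\gamma=\lceil c/2\rceil$, we have $(1+\alpha)(1+\gamma)\ge\frac{(c+2)^2-1}{4}$. The claim $\ln T_3(w)>g(w)$ is then equivalent to
\[
\frac{(1+\alpha)(1+\gamma)(1+\beta)\,(\beta+2)^{c}(c+2)^{\beta}}{x^{w+3}/4}>e^{-1/4}.
\]
I split into two cases according to the parity of $w$.

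\emph{Even $w$.} Here $\beta=c=w/2$, so $\beta+2=c+2=x$ and the power part is exactly $x^{w}$. The left-hand side is then at least
\[
\Bigl(1-\frac1{x^2}\Bigr)\Bigl(1-\frac1x\Bigr),
\]
using $1+\beta=x-1$. This is increasing in $x$. At the smallest even value $w=12$, i.e.\ $x=8$, it is about $0.86>e^{-1/4}\approx0.7788$.

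\emph{Odd $w$.} Here $\beta=\frac{w-1}2$, $c=\frac{w+1}2$, $\beta+2=x-\frac12$ and $c+2=x+\frac12$. The power part equals
\[
x^{w}\Bigl(1-\frac1{2x}\Bigr)^{c}\Bigl(1+\frac1{2x}\Bigr)^{\beta}.
\]
To bound this from below I use two logarithm estimates. For the factor with $1+\frac1{2x}$, I use $\ln(1+s)\ge s-\frac{s^2}{2}$. For the factor with $1-\frac1{2x}$, I use Lemma~\ref{xsh}(4) applied to $s=t/(1-t)$, which gives
\[
\ln(1-t)\ge-\frac{t}{1-t}+\frac{t^2}{2(1-t)}.
\]
With $t=\frac1{2x}$ and the relations $c-\beta=1$, $c+\beta=w=2x-4$, the total logarithm of the power correction is $-\frac1{2x}-O(1/x)$. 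Explicitly it exceeds about $-0.07$ at $x=7.5$, and its absolute value decreases in $x$. The linear factors contribute at least
\[
\frac{\bigl((x+\tfrac12)^2-1\bigr)\bigl(x-\tfrac32\bigr)}{x^3}.
\]
This ratio tends to $1$ and is about $1.12\cdot0.8\approx0.896$ at $x=7.5$. The product is then roughly $0.838>e^{-1/4}$ at $w=11$.

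The main obstacle is making the odd case rigorous for all $w\ge11$, not just at $w=11$. I will handle it by showing that each factor is monotone in $x$, or by bounding each factor below by its value at $x=7.5$. For the linear part, I write it as $(1+\frac1{2x}-\frac{3}{4x^2})(1-\frac{3}{2x})$ and check it is nondecreasing for $x\ge7.5$. The exponential correction is bounded uniformly by its worst value at $x=7.5$, since each error term is a decreasing function of $x$. Multiplying these uniform lower bounds and comparing with $e^{-1/4}$ then finishes the proof.
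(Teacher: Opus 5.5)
Your proposal is correct and follows essentially the same route as the paper: the same parity split, the normalization by $(\frac w2+2)^{w+3}/4$, the bound $(1+\alpha)(1+\gamma)\ge\frac{(c+2)^2-1}{4}$, and in the odd case the same linear factor and power correction $(1-\frac1{2x})^{c}(1+\frac1{2x})^{\beta}$. The paper treats that power correction more cleanly, writing it as $(1-\frac1{2x})(1-\frac1{4x^2})^{\beta}$ and applying Bernoulli with uniform bounds valid for $m\ge5$.

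Fix two slips that do not affect the conclusion. First, the linear factor is $(1+\frac1x-\frac3{4x^2})(1-\frac3{2x})$, not $(1+\frac1{2x}-\frac3{4x^2})(1-\frac3{2x})$; your numerical value $1.12$ is consistent with the corrected form. Second, your logarithmic lower bound for the power correction at $x=7.5$ is about $-0.092$, not $-0.07$. The product is therefore about $0.817$ rather than $0.838$, which is still comfortably above $e^{-1/4}\approx0.779$.
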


\begin{proof}
If $w=2m$, then $m\ge6$, $\beta=m$, and $\alpha+\gamma=m$.  Therefore
\[
\frac{4T_3(w)}{(m+2)^{2m+3}}
=
\frac{4(\alpha+1)(m+1)(\gamma+1)}{(m+2)^3}.
\]
Because $\alpha,\gamma$ differ by at most $1$, we have
$4(\alpha+1)(\gamma+1)\ge(m+1)(m+3)$. Then
\[
\frac{4T_3(w)}{(m+2)^{2m+3}}
\ge
\frac{(m+1)^2(m+3)}{(m+2)^3}=
\left(1-\frac1{m+2}\right)
\left(1-\frac1{(m+2)^2}\right)\ge\frac78\cdot\frac{63}{64}
>\frac45.
\]

If $w=2m+1$, then $m\ge5$, $\beta=m$, and
$\alpha+\gamma=m+1$. Hence
\[
T_3(w)
=
(\alpha+1)(m+1)(\gamma+1)
(m+2)^{m+1}(m+3)^m.
\]
Since
$\frac w2+2=m+\frac52$,
and
$w+3=2m+4$,
we have
\begin{equation}\label{lem2.6-eq1}
\frac{4T_3(w)}
{(m+5/2)^{2m+4}}=
\frac{4(m+1)(\alpha+1)(\gamma+1)}
{(m+5/2)^3}
\left(\frac{m+2}{m+5/2}\right)^{m+1}
\left(\frac{m+3}{m+5/2}\right)^m.
\end{equation}
Because $\alpha,\gamma$ differ by at most $1$ and
$\alpha+\gamma=m+1$,
\begin{equation}\label{xx-1}
\begin{split}
\frac{4(m+1)(\alpha+1)(\gamma+1)}
{(m+5/2)^3}
&\ge
\frac{(m+1)(m+2)(m+4)}
{(m+5/2)^3}\\
&=
\left(1-\frac1{2m+5}\right)
\left(1-\frac9{(2m+5)^2}\right)\\
&\ge
\frac{14}{15}\cdot\frac{24}{25}
=\frac{112}{125}>\frac89.
\end{split}
\end{equation}
Moreover, since $m\ge5$, we have
$1-\frac1{2m+5}\ge\frac{14}{15}$ and
$\frac{m}{(2m+5)^2}\le\frac1{30}$. By  Lemma \ref{xsh} (1), 
\begin{equation}\label{xx-2}
\begin{split}
\left(\frac{m+2}{m+5/2}\right)^{m+1}
\left(\frac{m+3}{m+5/2}\right)^m&=
\left(1-\frac1{2m+5}\right)^{m+1}
\left(1+\frac1{2m+5}\right)^m\\
&=
\left(1-\frac1{2m+5}\right)
\left(1-\frac1{(2m+5)^2}\right)^m\\
& \ge\left(1-\frac1{2m+5}\right) \left(1-\frac{m}{(2m+5)^2}\right)\\
&\ge \frac{14}{15}\cdot \frac{29}{30}.
\end{split}
\end{equation}
By \eqref{lem2.6-eq1}, \eqref{xx-1} and \eqref{xx-2}, we obtain
\[
\frac{4T_3(w)}
{(m+5/2)^{2m+4}}
>
\frac89\cdot\frac{14}{15}\cdot\frac{29}{30}
=
\frac{1624}{2025}
>
\frac45.
\]

Thus, in both parities
\[
\ln T_3(w)-
\left((w+3)\ln\left(\frac w2+2\right)-\ln4\right)
>\ln\frac45
=-\ln\frac54
=-\ln\left(1+\frac14\right)
>-\frac14,
\]
which completes the proof.
\end{proof}

\begin{lemma}\label{lem:FQ}
Let $w\geq11$ and $0\leq Q\leq\frac{(w-1)^2}{4}$.
Then
\[
\begin{aligned}
3\ln\left(
\frac{w+\frac{w^2}{4}-Q+3}{3}
\right)
+w\ln\left(2+\frac{2Q}{w}\right)<
(w+3)\ln\left(\frac w2+2\right)-\ln4-\frac14.
\end{aligned}
\]
\end{lemma}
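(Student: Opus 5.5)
The plan is to regard the left-hand side as a function of $Q$ alone, say
\[
F(Q)=3\ln\left(\frac{C-Q}{3}\right)+w\ln\left(2+\frac{2Q}{w}\right),\qquad C=w+\frac{w^2}{4}+3,
\]
show that $F$ is increasing on $[0,\frac{(w-1)^2}{4}]$, and then verify the inequality only at the right endpoint $Q=\frac{(w-1)^2}{4}$. That verification will use Lemma \ref{xsh}(2) together with an elementary estimate that is monotone in $w$.

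\textbf{Step 1 (monotonicity).} Differentiating gives
\[
F'(Q)=-\frac{3}{C-Q}+\frac{w}{w+Q}.
\]
Since $C-Q>0$ on the interval, $F'(Q)\ge0$ is equivalent to $w(C-Q)\ge 3(w+Q)$, that is,
\[
Q\le Q^*=\frac{w(C-3)}{w+3}=\frac{w^2(w+4)}{4(w+3)}.
\]
Because $w^2(w+4)>(w-1)^2(w+3)$ for every $w\ge1$, we have $Q^*>\frac{(w-1)^2}{4}$. So $F$ is increasing on the whole admissible range, and it suffices to treat $Q=\frac{(w-1)^2}{4}$. (Alternatively one can note that $F$ is concave, but monotonicity is all that is needed.)

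\textbf{Step 2 (endpoint evaluation).} At $Q=\frac{(w-1)^2}{4}$ we get
\[
C-Q=\frac{6w+11}{4},\qquad 2+\frac{2Q}{w}=\frac{(w+1)^2}{2w}.
\]
The target therefore becomes
\[
3\ln\frac{6w+11}{12}+w\ln\frac{(w+1)^2}{2w}<(w+3)\ln\frac{w+4}{2}-\ln4-\frac14 .
\]

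\textbf{Step 3 (comparison term by term).} Split the right-hand side as $w\ln\frac{w+4}{2}+3\ln\frac{w+4}{2}$ and compare each piece with the matching piece on the left.

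For the first piece, note that
\[
\frac{(w+1)^2}{w(w+4)}=1-\frac{2w-1}{w^2+4w}.
\]
Lemma \ref{xsh}(2) then gives
\[
w\ln\frac{(w+1)^2}{2w}-w\ln\frac{w+4}{2}\le-\frac{2w-1}{w+4}.
\]

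For the second piece, use
\[
\frac{6w+11}{6w+24}=1-\frac{13}{6(w+4)}.
\]
Lemma \ref{xsh}(2) again gives
\[
3\ln\frac{6w+11}{12}-3\ln\frac{w+4}{2}\le-\frac{13}{2(w+4)}.
\]

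\textbf{Step 4 (reduction to a numerical check).} Adding the two bounds, it remains to show
\[
\frac{4w+11}{2w+8}>\ln4+\frac14 .
\]
The left side equals $2-\frac{5}{2w+8}$, which is increasing in $w$. Its value at $w=11$ is $\frac{55}{30}\approx1.833$, while $\ln4+\frac14\approx1.636$. So the inequality holds for all $w\ge11$, and it is strict, which gives the strict inequality claimed.

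The main obstacle is Step 1, namely locating the critical point $Q^*$ and confirming that it lies beyond $\frac{(w-1)^2}{4}$. Once that is settled, Steps 2 through 4 are routine. The only other place needing care is the final numerical comparison, since the margin of about $0.2$ is modest.
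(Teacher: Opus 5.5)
Your proposal is correct and follows essentially the same route as the paper. Both arguments show that the left-hand side is increasing in $Q$ (the paper checks that the numerator $\frac{w^3}{4}+w^2-(w+3)Q$ of the derivative stays positive, which is your condition $Q\le Q^*$), evaluate at $Q=\frac{(w-1)^2}{4}$, factor out $\frac w2+2$ from both logarithms, and apply $\ln(1-x)\le -x$; the only cosmetic difference is that the paper finishes with $\ln 4<\frac32$, while you compare $2-\frac{5}{2w+8}$ with $\ln 4+\frac14$ numerically.
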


\begin{proof}
For $w\geq11$ and $0\leq Q\leq\frac{(w-1)^2}{4}$, let
\[
f(Q) \coloneqq
3\ln\left(
\frac{w+\frac{w^2}{4}-Q+3}{3}
\right)
+w\ln\left(2+\frac{2Q}{w}\right).
\]
A quick computation gives
\[
\begin{aligned}
f'(Q)=
-\frac{3}{w+\frac{w^2}{4}-Q+3}
+\frac{w}{w+Q}=
\frac{\frac{w^3}{4}+w^2-(w+3)Q}
{\left(w+\frac{w^2}{4}-Q+3\right)(w+Q)}.
\end{aligned}
\]
Since $0\leq Q\leq\frac{(w-1)^2}{4}$,
we have $w+Q>0$ and
\begin{align*}
w+\frac{w^2}{4}-Q+3 &\geq
w+\frac{w^2}{4}-\frac{(w-1)^2}{4}+3
=
\frac{6w+11}{4}>0;\\
\frac{w^3}{4}+w^2-(w+3)Q
& \geq
\frac{w^3}{4}+w^2
-\frac{(w+3)(w-1)^2}{4}=
\frac{3w^2+5w-3}{4}>0.
\end{align*}
Thus $f'(Q)>0$. Hence $f(Q)$ is strictly increasing on
$\left[0,\frac{(w-1)^2}{4}\right]$, and therefore
\begin{equation*}
f(Q)
\leq
f\left(\frac{(w-1)^2}{4}\right)=
3\ln\left(\frac{6w+11}{12}\right)
+w\ln\left(\frac{(w+1)^2}{2w}\right).
\end{equation*}
Since
\[
\frac{6w+11}{12}
=
\left(\frac w2+2\right)
\left(1-\frac{13}{6w+24}\right)\quad\text{and}\quad 
\frac{(w+1)^2}{2w}
=
\left(\frac w2+2\right)
\left(1-\frac{2w-1}{w(w+4)}\right),
\]
we obtain
\begin{equation}\label{lem2.7-eq1}
f(Q)
\leq
(w+3)\ln\left(\frac w2+2\right)
+3\ln\left(1-\frac{13}{6w+24}\right)
+w\ln\left(1-\frac{2w-1}{w(w+4)}\right).
\end{equation}
For $w\geq11$, we have
$0<\frac{13}{6w+24}<1$ and $0<\frac{2w-1}{w(w+4)}<1$.
From \eqref{lem2.7-eq1}, using Lemma \ref{xsh} (2) and
$\ln4<\frac32$, we obtain
\[
\begin{aligned}
f(Q)-
\left(
(w+3)\ln\left(\frac w2+2\right)-\ln4
\right)
&\leq
3\ln\left(1-\frac{13}{6w+24}\right)
+w\ln\left(1-\frac{2w-1}{w(w+4)}\right)
+\ln4\\
&<
-\frac{39}{6w+24}
-\frac{2w-1}{w+4}
+\frac32\\
&=
-\frac14-\frac{w-6}{4(w+4)}<
-\frac14,
\end{aligned}
\]
which implies the desired result.
\end{proof}

\section{Proof of Theorem \ref{main_th1}}
In this section we prove Theorem \ref{main_th1}. We treat the cases $d=2$ and $d=3$ separately in the following two propositions. These results   directly lead to Theorem \ref{main_th1}.

\begin{proposition}\label{d2_prop}
    Let $n \ge 3$. Then $K_{\lfloor \frac{n}{2} \rfloor, \lceil \frac{n}{2} \rceil}$ uniquely attains the maximum number of spanning trees in $\mathcal{B}(n, 2)$.
\end{proposition}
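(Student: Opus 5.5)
Let $G\in\mathcal{B}(n,2)$ with bipartition $X\cup Y$, $|X|=a$, $|Y|=b$, $a+b=n$. The plan has two stages. First I reduce to complete bipartite graphs: since $G$ is bipartite and connected, it is a spanning subgraph of $K_{a,b}$. If $G\neq K_{a,b}$, repeatedly adding missing $X$--$Y$ edges and applying Lemma~\ref{lem1} at each step gives $\tau(G)<\tau(K_{a,b})$. Moreover, $K_{a,b}$ with $a,b\ge1$ and $n\ge3$ has diameter exactly $2$, so it lies in $\mathcal{B}(n,2)$. Hence any maximizer must be complete bipartite, and it remains to compare $\tau(K_{a,b})$ over $a+b=n$, $1\le a\le b$.

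Second, I optimize the part sizes. Either the classical formula $\tau(K_{a,b})=a^{b-1}b^{a-1}$, or Corollary~\ref{cor1} applied with $d=2$ to $K_{a,b}=P_3\circ(1,b,a-1)$ (whose product formula gives $1\cdot (a)^{b-1}b\cdot b^{a-2}=a^{b-1}b^{a-1}$), yields this count. Alternatively, Lemma~\ref{lem2} provides a balancing move directly. Write $K_{a,b}=P_3\circ(n_1,n_2,n_3)$ with the parts $V_1,V_3$ forming the side of size $a$. Lemma~\ref{lem2} only moves vertices between non-adjacent classes with nested neighbourhoods, so it cannot transfer a vertex from one side to the other. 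I therefore use the explicit formula instead.

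So I will show that $h(a)=(b-1)\ln a+(a-1)\ln b$ with $b=n-a$ strictly increases when moving toward balance. Concretely, for $b\ge a+2$ I compare $K_{a,b}$ with $K_{a+1,b-1}$. The ratio is
\[
\frac{\tau(K_{a+1,b-1})}{\tau(K_{a,b})}=\frac{(a+1)^{b-2}(b-1)^{a}}{a^{b-1}b^{a-1}}
=\Bigl(1+\tfrac1a\Bigr)^{b-2}\Bigl(1-\tfrac1b\Bigr)^{a-1}\frac{a+1}{\,\cdot\,}\cdots,
\]
and the task is to show that it exceeds $1$. The cleanest route is to write it as
\[
\frac{(a+1)^{b-2}\,(b-1)^{a}}{a^{b-1}\,b^{a-1}}
\]
and take logarithms. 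The claim reduces to $(b-2)\ln(1+1/a)+a\ln(b-1)-(a-1)\ln b-\ln a>0$. Equivalently, this says that $\ln\tau(K_{x,n-x})=(n-x-1)\ln x+(x-1)\ln(n-x)$ is symmetric in $x\leftrightarrow n-x$ and strictly concave on $[1,n-1]$, so it is maximized only at $x=\lfloor n/2\rfloor$ (and its mirror, giving the same graph).

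The main obstacle is this concavity or discrete monotonicity check. I would compute the second derivative, $-(n-x-1)/x^2-2/x-2/(n-x)-(x-1)/(n-x)^2$, which is negative for $1\le x\le n-1$. Here the terms $2/x$ and $2/(n-x)$ arise from the cross derivatives $-1/x+1/(n-x)$ taken twice. Strict concavity together with symmetry then gives a unique integer maximizer pair $\{\lfloor n/2\rfloor,\lceil n/2\rceil\}$, proving uniqueness up to isomorphism.
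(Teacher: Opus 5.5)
Your proposal is correct and follows essentially the paper's route: reduce to complete bipartite graphs, use $\tau(K_{a,b})=a^{b-1}b^{a-1}$, and finish with one-variable calculus. On the reduction, the paper simply observes that a connected bipartite graph of diameter $2$ is already complete bipartite, so your Lemma~\ref{lem1} step is valid but not needed. On the calculus, your second derivative of $h(x)=(n-x-1)\ln x+(x-1)\ln(n-x)$ is right; it equals $-\frac{n-1}{x^2}-\frac1x-\frac{n-1}{(n-x)^2}-\frac1{n-x}$. Strict concavity of $h$ together with its symmetry about $n/2$ does the same job as the paper's direct check that $f'<0$ in the variable $|a-b|/2$.

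In a write-up, drop the unfinished ratio display. Also replace ``equivalently'' by ``it suffices'', since concavity plus symmetry implies the discrete inequality but is not equivalent to it.
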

\begin{proof}
    Among the bipartite graphs of order $n$, those of diameter two are precisely the complete bipartite graphs. By observing that $K_{a, b} \cong P_2 \circ (a, b)$, Corollary \ref{cor1} gives $\tau(K_{a, b}) = a^{b - 1} b^{a - 1}$. Now, let $f \colon [0, \frac{n}{2} - 1] \to \mathbb{R}$ be defined by
    \[
        f(x) = \left(\frac{n}{2} - x - 1\right) \ln \left(\frac{n}{2} + x\right) + \left( \frac{n}{2} + x - 1 \right) \ln \left(\frac{n}{2} - x\right),
    \]
    so that $\ln \tau(K_{a, b}) = f\left(\frac{|a - b|}{2}\right)$ for any $a, b \in \mathbb{N}^+$ with $a + b = n$. A routine computation gives
    \[
        f'(x) = \ln\frac{\frac{n}{2} - x}{\frac{n}{2} + x} - \frac{2x(n - 1)}{\left(\frac{n}{2} + x\right)\left( \frac{n}{2} - x \right)} < 0
    \]
    for every $x \in (0, \frac{n}{2} - 1 )$, which means that $f$ is a decreasing function. Therefore, $K_{\lfloor \frac{n}{2} \rfloor, \lceil \frac{n}{2} \rceil}$ uniquely maximizes the number of spanning trees in $\mathcal{B}(n, 2)$.
\end{proof}

\begin{proposition}\label{d3_prop}
    Let $n \ge 4$. Then the graph obtained from $K_{\lfloor \frac{n}{2} \rfloor, \lceil \frac{n}{2} \rceil}$ by deleting any edge uniquely attains the maximum number of spanning trees in $\mathcal{B}(n, 3)$.
\end{proposition}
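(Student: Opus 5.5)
Every graph in $\mathcal{B}(n,3)$ is a spanning subgraph of $K_{a,b}$ minus one edge, so Lemma \ref{lem1} reduces the problem to graphs of the form $K_{a,b}-e$. For these, Corollary \ref{cor1} gives a closed formula for $\tau$, and a monotonicity argument modelled on Proposition \ref{d2_prop} finishes the proof.

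\emph{Reduction.} Let $G\in\mathcal{B}(n,3)$ have bipartition $X\cup Y$, where $|X|=a$, $|Y|=b$ and $a+b=n$. Every connected complete bipartite graph has diameter at most $2$, so $G$ is a proper spanning subgraph of $K_{a,b}$. Hence some edge $e=xy$ with $x\in X$, $y\in Y$ is missing, and $G\subseteq K_{a,b}-e$.

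If $a=1$ (or $b=1$), then $G$ is a connected spanning subgraph of the star $K_{1,b}$. This forces $G=K_{1,b}$, which has diameter $2$, a contradiction. So $a,b\ge 2$.

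Next we check that $K_{a,b}-e$ itself lies in $\mathcal{B}(n,3)$. It is bipartite and connected. We have $d(x,y)=3$, since they are in opposite classes and not adjacent. Every other pair of vertices is at distance at most $2$, using a common neighbour in $X\setminus\{x\}$ or $Y\setminus\{y\}$.

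By Lemma \ref{lem1}, applied repeatedly to add the missing edges of $K_{a,b}-e$ one at a time, we get $\tau(G)\le\tau(K_{a,b}-e)$. Each added edge gives a strict increase, so equality holds only when $G=K_{a,b}-e$. All edges of $K_{a,b}$ are equivalent under automorphisms, so $K_{a,b}-e$ is determined up to isomorphism by $\{a,b\}$. It therefore remains to maximize $\tau(K_{a,b}-e)$ over $a+b=n$ with $a,b\ge2$.

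\emph{Counting.} The graph $K_{a,b}-e$ is the blow-up $P_4\circ(1,\,b-1,\,a-1,\,1)$, with parts in the order $\{x\}$, $Y\setminus\{y\}$, $X\setminus\{x\}$, $\{y\}$. Corollary \ref{cor1} then gives
\[
\tau(K_{a,b}-e)=a^{b-2}(b-1)\,b^{a-2}(a-1)=a^{b-1}b^{a-1}\cdot\Bigl(1-\frac{n-1}{ab}\Bigr).
\]

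\emph{Optimization.} Write $a=\frac n2+t$ and $b=\frac n2-t$. Then
\[
\ln\tau(K_{a,b}-e)=f(|t|)+\ln\Bigl(1-\frac{n-1}{ab}\Bigr),
\]
where $f$ is the function from the proof of Proposition \ref{d2_prop}. That proof shows $f$ is strictly decreasing. Since $ab=\frac{n^2}{4}-t^2$ decreases in $|t|$, the second term is also strictly decreasing in $|t|$; it is well defined because $ab>n-1$ when $a,b\ge2$.

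The constraint $a,b\ge2$ means $|t|\le \frac n2-2$, which is contained in the domain $[0,\frac n2-1]$ of $f$. Hence $|a-b|\le1$ is the unique optimum, and the maximizer is $K_{\lfloor n/2\rfloor,\lceil n/2\rceil}-e$.

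The only delicate points are the following.
\begin{itemize}
\item Ruling out $a=1$ or $b=1$, which is needed both for the diameter claim and for the blow-up description.
\item Confirming that uniqueness is preserved: the inequality from Lemma \ref{lem1} is strict, and the deleted edge is unique up to isomorphism.
\end{itemize}
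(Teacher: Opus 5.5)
Your proof is correct and follows the same route as the paper: you reduce via Lemma \ref{lem1} to $K_{a,b}-e$ with $a,b\ge 2$, evaluate $\tau$ through the blow-up $P_4\circ(1,b-1,a-1,1)$ and Corollary \ref{cor1}, and show that the count decreases in $|a-b|$. The only difference is in the last step: you write $\tau(K_{a,b}-e)=\tau(K_{a,b})\bigl(1-\frac{n-1}{ab}\bigr)$ and reuse the monotonicity from Proposition \ref{d2_prop}, whereas the paper differentiates a new function, so your version reaches the same conclusion a little more economically.
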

\begin{proof}
    Let $K'_{a, b}$ be the graph obtained from $K_{a, b}$ by deleting any edge. First, we observe that any extremal graph must be of the form $K'_{a, b}$ with $a + b = n$ and $a, b \ge 2$. Indeed, suppose that $G^*$ maximizes the number of spanning trees in $\mathcal{B}(n, 3)$, and let $a$ and $b$ be the sizes of its bipartition sets. Then $G^* \not\cong K_{a, b}$, so either $G^* \cong K'_{a, b}$, or $G^*$ is a proper subgraph of $K'_{a, b}$. Either way, since $G^*$ is connected, we have $a, b \ge 2$. Now, because $K'_{a, b} \in \mathcal{B}(n, 3)$, if $G^*$ is a proper subgraph of $K'_{a, b}$, Lemma \ref{lem1} would yield a contradiction to the maximality of $G^*$. Therefore, when maximizing the number of spanning trees in $\mathcal{B}(n, 3)$, it suffices to consider the graphs of the form $K'_{a, b}$ with $a + b = n$ and $a, b \ge 2$.

    Since $K'_{a, b} \cong P_4 \circ (1, a - 1, b - 1, 1)$, Corollary \ref{cor1} gives $\tau(K'_{a, b}) = (a - 1)(b - 1) a^{b - 2} b^{a - 2}$. Now, let $f \colon [0, \frac{n}{2} - 2] \to \mathbb{R}$ be defined by
    \begin{align*}
        f(x) &= \left( \frac{n}{2} - x - 2 \right) \ln \left( \frac{n}{2} + x \right) + \left( \frac{n}{2} + x - 2 \right) \ln \left( \frac{n}{2} - x \right)\\
        &\phantom{\coloneqq}\, + \ln \left( \frac{n}{2} + x - 1 \right) + \ln \left( \frac{n}{2} - x - 1 \right) ,
    \end{align*}
    so that $\ln \tau(K'_{a, b}) = f\left( \frac{|a - b|}{2} \right)$ for any $a, b \ge 2$ with $a + b = n$. A routine computation gives
    \[
        f'(x) = \ln \frac{\frac{n}{2} - x}{\frac{n}{2} + x} - \frac{2x(n - 2)}{\left(\frac{n}{2} + x\right)\left( \frac{n}{2} - x \right)} - \frac{2x}{\left(\frac{n}{2} + x - 1\right)\left( \frac{n}{2} - x - 1 \right)} < 0
    \]
    for every $x \in (0, \frac{n}{2} - 2)$, hence $f$ is a decreasing function. Therefore, $K'_{\lfloor \frac{n}{2} \rfloor, \lceil \frac{n}{2} \rceil}$ uniquely maximizes the number of spanning trees in $\mathcal{B}(n, 3)$.
\end{proof}

\section{Proof of Theorem \ref{main_th2}}\label{sc_main}
In the present section we prove Theorem \ref{main_th2}. Put $w=n-d-1\ge 11$.  For $d\geq4$, let $G^{*}\in \mathcal{B}(n,d)$ be a graph that attains the maximum number of spanning trees. 
Choose two vertices $v_1$ and $v_{d+1}$ of $G^*$ such that $d_{G^*}(v_1,v_{d+1})=d$. For $i\in [d+1]$, let
$V_i=\{u\in V(G^*): d_{G^*}(v_1,u)=i-1\}$.
Then
\[
V(G^*)=V_1\sqcup V_2\sqcup\cdots\sqcup V_{d+1},
\]
where $V_1=\{v_1\}$ and $V_{i}\neq\emptyset$ for $i=2,3,\ldots,d+1$. We refer to each $V_i$ as a partition subset of $G^*$. Keeping  the above notations, we will prove some statements on the structure of $G^*$, which will eventually show the main result.

\begin{lemma}\label{xh1}
$V_i$ is  an independent set for each $i\in[d+1]$.
\end{lemma}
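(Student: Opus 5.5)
This is the standard parity argument for bipartite graphs, and it does not use extremality at all. Fix $i\in[d+1]$ and take two distinct vertices $u,u'\in V_i$; the goal is to show $uu'\notin E(G^*)$. By the definition of the partition subsets, $d_{G^*}(v_1,u)=d_{G^*}(v_1,u')=i-1$.

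Since $G^*$ is bipartite, fix a proper $2$-colouring, that is, a bipartition $X\cup Y$. Along any walk the colour alternates at each step. Hence for every vertex $x$, the parity of $d_{G^*}(v_1,x)$ determines which side of the bipartition $x$ lies on: $x$ is on the side of $v_1$ if and only if $d_{G^*}(v_1,x)$ is even. Because $u$ and $u'$ are at the same distance $i-1$ from $v_1$, they lie in the same part of the bipartition. Vertices in the same part are never adjacent, so $uu'\notin E(G^*)$, and $V_i$ is independent.

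An equivalent formulation avoids colourings. If $uu'\in E(G^*)$, take shortest paths $P$ from $v_1$ to $u$ and $P'$ from $v_1$ to $u'$, and let $z$ be the last common vertex of $P$ and $P'$. The subpaths from $z$ to $u$ and from $z$ to $u'$ have equal length, because $d_{G^*}(v_1,z)$ is the same along both paths. Together with the edge $uu'$ they form a cycle of odd length $2\bigl(i-1-d_{G^*}(v_1,z)\bigr)+1$. This contradicts the fact that a bipartite graph has no odd cycles.

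No step here is a real obstacle. The only point needing care in the cycle version is checking that the two subpaths from $z$ are internally disjoint, so that they genuinely form a cycle; this follows from choosing $z$ as the last common vertex. The colouring version avoids this point entirely, so I would write the proof that way.
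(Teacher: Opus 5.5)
Your proof is correct and is essentially the paper's argument: the paper joins the two shortest paths from $v_1$ with the edge $xy$ to obtain a closed walk of odd length $2k-1$, contradicting bipartiteness, which is the same parity fact your colouring version uses. Because the paper works with a closed walk rather than a cycle, it never needs the internal-disjointness check from your second version, since a bipartite graph has no closed walks of odd length.
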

\begin{proof}
Suppose that  there exists an integer $k\in [d+1]\setminus \{1\}$ satisfying $x,y\in V_k$ and $xy\in E(G^*)$. Then $d_{G^*}(v_1,x)=d_{G^*}(v_1,y)=k-1$, so we can find a closed walk of length $2k-1$, which contradicts the bipartiteness of $G^*$.
\end{proof}

\begin{lemma}\label{xh2}
$G^*[V_{i} \cup V_{i+1}]$ is a complete bipartite subgraph for each $i \in [d]$.
\end{lemma}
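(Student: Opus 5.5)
The plan is to argue by contradiction, using the maximality of $G^*$ together with Lemma \ref{lem1}. Fix $i\in[d]$ and suppose there are $x\in V_i$ and $y\in V_{i+1}$ with $xy\notin E(G^*)$. Let $G'=G^*+xy$. Since $G^*$ is connected and noncomplete, Lemma \ref{lem1} gives $\tau(G')>\tau(G^*)$. It then suffices to show that $G'\in\mathcal{B}(n,d)$, since this contradicts the choice of $G^*$ as a maximizer.

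Bipartiteness of $G'$ comes from the layered structure. By Lemma \ref{xh1} each $V_k$ is independent, and every edge of $G^*$ joins vertices whose distances from $v_1$ differ by exactly one. Both claims follow from the BFS layering together with bipartiteness. Hence the classes $\bigcup_{k \text{ odd}}V_k$ and $\bigcup_{k\text{ even}}V_k$ form a bipartition of $G^*$. The new edge $xy$ joins consecutive layers, so it respects this bipartition, and $G'$ is bipartite. Connectivity and order are clearly preserved.

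The main point is the diameter. Adding an edge cannot increase distances, so $\operatorname{diam}(G')\le d$. For the lower bound, I would show that $d_{G'}(v_1,v_{d+1})=d$ still holds. Every edge of $G'$, including $xy$, joins layers $V_k$ and $V_{k+1}$ for some $k$. The layer index therefore changes by at most $1$ along any edge. Any $v_1$–$v_{d+1}$ path goes from layer $1$ to layer $d+1$, so it has length at least $d$. Hence $d_{G'}(v_1,v_{d+1})\ge d$, the diameter of $G'$ is exactly $d$, and $G'\in\mathcal{B}(n,d)$, which gives the contradiction.

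I expect the main obstacle to be only the bookkeeping in the claim that all edges of $G^*$ run between consecutive layers. Edges inside a layer are excluded by Lemma \ref{xh1}. Edges between layers whose indices differ by at least $2$ are excluded by the definition of distance from $v_1$: if $u\in V_k$ and $u'\in V_{k+2}$ were adjacent, then $d(v_1,u')\le k$, a contradiction.
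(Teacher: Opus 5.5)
Your proof is correct and follows the paper's route: add the missing edge $xy$ between consecutive layers, check that the resulting graph stays in $\mathcal{B}(n,d)$, and use Lemma \ref{lem1} to contradict the maximality of $G^*$. You also supply details the paper leaves implicit: the odd/even layer bipartition, the layer-index argument giving $d_{G'}(v_1,v_{d+1})=d$, and the use of Lemma \ref{xh1} for independence of the two parts.
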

\begin{proof}
Indeed, if there exist nonadjacent vertices $x\in V_i$ and $y\in V_{i+1}$ for some $i\in [d]$, then adding the edge $xy$ to $G^*$ preserves the bipartiteness and does not change the order or diameter of the graph. By Lemma \ref{lem1}, this increases the number of spanning trees, contradicting the maximality of $G^*$. Then the result follows from Lemma \ref{xh1}.
\end{proof}

By Lemma \ref{xh2}, we have $G^*\cong P_{d+1}\circ(n_1,n_2, \ldots, n_{d+1})$. Let $V(P_{d+1})=\{v_{1},v_{2}, \ldots, v_{d+1}\}$. According to the partition of $G^*$, it is known that $n_1=1$. Note that  $N_{P_{d+1}}(v_{d+1})\subset N_{P_{d+1}}(v_{d-1})$. By Lemma \ref{lem2}, we infer that $n_{d+1}=1$. Define the $(d+1)$-dimensional integer vector $\mathbf{n}=(1,n_2,n_3, \ldots, n_{d},1)$. Thus,
$G^*\cong P_{d+1}\circ(1,n_2,n_3, \ldots, n_d,1)=P_{d+1}\circ\mathbf{n}.$
By Corollary \ref{cor1}, we have
\begin{equation}\label{eq1}
\begin{split}
\tau(G^*)=\prod_{i=2}^{d}n_i (n_{i-1}+n_{i+1})^{n_{i}-1},
\end{split}
\end{equation}
where $n_1=n_{d+1}=1$.
We continue with proving some properties of $n_2,n_3,\dots,n_d$, which will further restrict the class of candidate extremal graphs.

\begin{lemma}\label{xh3}
The entries greater than $1$ among
$n_2,n_3,\ldots,n_d$ occur consecutively.
\end{lemma}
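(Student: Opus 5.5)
The plan is to argue by contradiction, using the product formula \eqref{eq1} and a rearrangement of the vector $\mathbf{n}$ that keeps the order $n$, the number of parts $d+1$ and the conditions $n_1=n_{d+1}=1$. Any such vector gives a graph $P_{d+1}\circ\mathbf{n}'\in\mathcal{B}(n,d)$, since a blow-up of $P_{d+1}$ with positive parts is connected, bipartite and has diameter $d$. Suppose the nontrivial entries are not consecutive. Then there are indices $2\le i<k\le d$ with $n_i\ge2$, $n_k\ge2$ and $n_j=1$ for all $i<j<k$, where $k-i\ge2$. Put $r=k-i-1\ge1$ for the length of this gap of ones.

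\textbf{Step 1 (shorten the gap without changing $\tau$).} In \eqref{eq1}, an index $j$ with $n_j=1$ contributes the factor $1\cdot(n_{j-1}+n_{j+1})^{0}=1$. Any other index $\ell$ contributes $n_\ell(n_{\ell-1}+n_{\ell+1})^{n_\ell-1}$, which depends only on $n_\ell$ and its two neighbours. Now suppose $r\ge2$. Delete one entry $1$ from the interior of the gap and insert a new entry $1$ just before the final $n_{d+1}=1$. The result is again a positive vector of length $d+1$ with both end entries equal to $1$ and the same sum.

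We check that every nontrivial factor is unchanged. The neighbours of $n_i$ and $n_k$ are still $1$'s, because the gap still has length $r-1\ge1$. The old entry $n_{d+1}=1$ now sits at position $d$ and contributes the factor $1$. The entry $n_d$ (moved to position $d-1$) still has right neighbour $1$. Hence $\tau$ is unchanged. Iterating, and since $G^*$ is extremal, we may assume without loss of generality that $r=1$, i.e.\ the pattern is $n_{i-1},\,x,\,1,\,y,\,n_{k+1}$ with $x=n_i\ge2$ and $y=n_k\ge2$.

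\textbf{Step 2 (close a gap of length one strictly increases $\tau$).} Delete that single $1$ and again append a $1$ just before the last entry. The factors of all indices other than $i$ and $k$ are preserved, by the same locality argument as in Step 1. (This includes the case where $n_k$ is next to the end: it is then still next to a $1$.) The factor at $i$ changes from $x(n_{i-1}+1)^{x-1}$ to $x(n_{i-1}+y)^{x-1}$. The factor at $k$ changes from $y(1+n_{k+1})^{y-1}$ to $y(x+n_{k+1})^{y-1}$. Since $x,y\ge2$, both factors strictly increase, so $\tau$ strictly increases. This contradicts the maximality of $G^*$, which proves Lemma \ref{xh3}.

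\textbf{Main obstacle.} The step needing the most care is bookkeeping rather than estimation. We must check that the appended $1$ and the shifted suffix really leave all other factors of \eqref{eq1} intact, including the boundary cases $k=d$ and $i=2$ (where a neighbour is $n_1=1$). We must also check that the new vector still has both ends equal to $1$, so that the new graph lies in $\mathcal{B}(n,d)$. Once locality of the factors in \eqref{eq1} is noted, both steps are immediate.
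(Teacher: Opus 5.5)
Your proof is correct and is essentially the paper's argument: both rearrange the blow-up vector so that the ones filling a gap are moved to an end of the path, and both use the locality of the factors in \eqref{eq1} to see that only the factors at the two gap endpoints change, each strictly increasing since $x,y\ge 2$. The only difference is that the paper moves the entire gap in one step (to the front of the vector), so your Step 1 reduction to a gap of length one is unnecessary---your Step 2 computation applies verbatim to a gap of any length.
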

\begin{proof}
Suppose, to the contrary, that the entries greater than $1$ among
$n_2,n_3,\ldots,n_d$ do not occur consecutively. Then there exist
$k_1,k_2\in\{3,4,\ldots,d-1\}$ such that $k_1\leq k_2$, $n_i=1$ for all $k_1\leq i\leq k_2$,
 $n_{k_1-1}\geq2$ and
$n_{k_2+1}\geq2$.
Now, let $G'$ be the graph obtained from $G^*$ by permuting the blow-up integer vector so that the entries $n_i=1$ with $k_1\leq i\leq k_2$ are placed at the beginning, i.e., let
\[
G'=P_{d+1}\circ
\left(
\underbrace{1,1,\ldots,1}_{\mbox{\scriptsize $k_2-k_1+2$ times}},
n_2,n_3,\ldots,n_{k_1-1},
n_{k_2+1},n_{k_2+2},\ldots,n_d,1
\right).
\]
Clearly, $G'\in\mathcal{B}(n,d)$. By Corollary \ref{cor1} and
\eqref{eq1}, we have
\[
\begin{aligned}
\frac{\tau(G')}{\tau(G^*)}
=
\frac{
\left(n_{k_1-2}+n_{k_2+1}\right)^{n_{k_1-1}-1}
\left(n_{k_1-1}+n_{k_2+2}\right)^{n_{k_2+1}-1}
}{
\left(n_{k_1-2}+1\right)^{n_{k_1-1}-1}
\left(1+n_{k_2+2}\right)^{n_{k_2+1}-1}
}>1,
\end{aligned}
\]
where the strict inequality follows from
$n_{k_1-1},n_{k_2+1}\geq2$. Hence
$\tau(G')>\tau(G^*)$, contradicting the maximality of $G^*$. This completes the proof.
\end{proof}

 Let $S=\{i\in \{2,3,\ldots,d\}:n_i\geq 2\}$. By Lemma \ref{xh3}, we know that $S$ is a set of consecutive integers. Suppose that $S=\{s,s+1,\ldots,s+r-1\}$
where  $2\leq s\leq s+r-1\leq d$ and $1\leq r=|S|\leq d-1$, that is, $r$ is the number of $n_i\geq 2$ with $i\in \{2,3,\ldots,d\}$ in $G^*\cong P_{d+1}\circ(1,n_2,n_3, \ldots, n_d,1)$. For convenience, write 
$x_j=n_{s+j-1}-1$ for $1\leq j\leq r$,
and set $x_0=x_{r+1}=0$. Then $x_j\geq1$ for every $j\in[r]$. Since $n_i=1$ for every $i \in [d + 1] \setminus S$, 
\[
\sum_{i=1}^{r}x_i=\sum_{i=1}^{d+1}(n_i-1)=n-d-1=w\geq11.
\]
 By \eqref{eq1}, the spanning tree formula  of $G^*$ becomes
\begin{equation}\label{eq2}
\tau(G^*)=
\prod_{i=1}^{r}
(1+x_i)(2+x_{i-1}+x_{i+1})^{x_i}.
\end{equation}

For each $w\in\mathbb{N}^+$, let $\mathcal{A}_w$ denote the set of all finite
sequences of positive integers $(a_1,a_2,\ldots,a_k)$ such that $\sum_{i=1}^{k}a_i=w$.
Furthermore, for any $q\in\mathbb{N}^+$, let $\mathcal{A}_w^q$ denote the
subset of $\mathcal{A}_w$ consisting of the sequences with at most $q$
entries. For any $(a_1,a_2,\ldots,a_k)\in\mathcal{A}_w$, let $\varphi$ be the function defined by
\[
\varphi(a_1,a_2,\ldots,a_k)
\coloneqq
\prod_{i=1}^{k}
(1+a_i)(2+a_{i-1}+a_{i+1})^{a_i},
\]
where $a_0=a_{k+1}=0$. By \eqref{eq2}, we observe that $\tau(G^*)=\varphi(x_1,x_2,\ldots,x_r)$.

Notice that every sequence $(a_1,a_2,\ldots,a_k)\in\mathcal{A}_w^{d-1}$ determines a graph in $\mathcal{B}(n,d)$ by assigning the values
$1+a_1,1+a_2,\ldots,1+a_k$
to $k$ consecutive internal entries of the integer vector corresponding to  $P_{d+1}$ and assigning $1$ to all the other parts. By  \eqref{eq1}, the number of spanning trees of this graph is $\varphi(a_1,a_2,\ldots,a_k)$.
Since $G^*$  attains the maximum number of spanning trees among all graphs in $\mathcal{B}(n,d)$, the sequence
$(x_1,x_2,\ldots,x_r)\in\mathcal{A}_w^{d-1}$
maximizes $\varphi$ over $\mathcal{A}_w^{d-1}$ with $d\ge 4$. Therefore, in order to
prove that $r=3$, it suffices to show that every sequence maximizing
$\varphi$ over $\mathcal{A}_w^{d-1}$ with $d\ge4$ has exactly three entries.  We first exclude the cases $r=1$ and $2$. 

\begin{lemma}\label{xh4}
$r\neq 1,2$.
\end{lemma}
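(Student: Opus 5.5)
Since $(x_1,\ldots,x_r)$ maximizes $\varphi$ over $\mathcal{A}_w^{d-1}$ and $d\ge4$, the sequence $(\alpha,\beta,\gamma)$ from Lemma \ref{lem:T3lower} (with $\beta=\lfloor w/2\rfloor$, $\alpha=\lfloor(w-\beta)/2\rfloor$, $\gamma=w-\alpha-\beta$, all $\ge1$ since $w\ge11$) lies in $\mathcal{A}_w^{3}\subseteq\mathcal{A}_w^{d-1}$. Its value is exactly $\varphi(\alpha,\beta,\gamma)=(1+\alpha)(1+\beta)(1+\gamma)(2+\beta)^{\alpha+\gamma}(2+\alpha+\gamma)^{\beta}=T_3(w)$. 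So by Lemma \ref{lem:T3lower}, it suffices to show $\ln\varphi(x_1)\le g(w)$ and $\ln\varphi(x_1,x_2)\le g(w)$ whenever the entries sum to $w$; then $\tau(G^*)<T_3(w)$ would contradict maximality.

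\emph{Case $r=1$.} Here $\varphi(w)=(1+w)2^{w}$. Compare with $g(w)=(w+3)\ln(w/2+2)-\ln4-\frac14$. Since $w/2+2\ge 7.5$, we get $(w+3)\ln(w/2+2)\ge (w+3)\ln 7.5$, which exceeds $w\ln2+\ln(1+w)+\ln 4+\frac14$ by a wide margin for $w\ge11$. This is a trivial check.

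\emph{Case $r=2$.} Write $x_1=a$, $x_2=b$ with $a+b=w$. Then
\[
\varphi(a,b)=(1+a)(1+b)(2+b)^{a}(2+a)^{b}.
\]
I would bound this by a crude estimate rather than match it to Lemma \ref{lem:FQ}, which appears designed for the longer-sequence case. Both $2+a$ and $2+b$ are at most $w+1$, but a better route is weighted AM--GM on the exponent part: $a\ln(2+b)+b\ln(2+a)\le w\ln\bigl(2+\tfrac{2ab}{w}\bigr)$ by concavity of $\ln$, and $ab\le w^2/4$ gives at most $w\ln(2+w/2)$. Also $(1+a)(1+b)\le(1+w/2)^2$. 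Hence
\[
\ln\varphi(a,b)\le (w+2)\ln\left(\frac w2+2\right).
\]
The right side is below $g(w)$ because $\ln(w/2+2)\ge\ln 7.5>\ln4+\frac14$.

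The main obstacle is only this last numerical comparison, and it is comfortable. One could alternatively plug $Q=ab$ into Lemma \ref{lem:FQ}-style bounds, but the direct estimate above avoids needing $ab\le (w-1)^2/4$.
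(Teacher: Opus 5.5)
Your proof is correct, but it takes a different route from the paper. The paper disposes of both cases with explicit local improving moves. For $r=1$ it computes $\varphi(w-1,1)/\varphi(w)=w(3/2)^{w-1}>1$. For $r=2$ with $b\ge a$ it computes $\varphi(1,a,b-1)/\varphi(a,b)=2b/(b+1)>1$.

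You instead compare globally against the three-block candidate $(\alpha,\beta,\gamma)$. You bound $\ln\varphi$ by the same Jensen step as in \eqref{eq8} together with AM--GM, and then invoke Lemma~\ref{lem:T3lower}. Your computations check out:
\begin{itemize}
\item $\varphi(w)=(1+w)2^w$ and $\varphi(a,b)=(1+a)(1+b)(2+b)^a(2+a)^b$;
\item the concavity step gives $a\ln(2+b)+b\ln(2+a)\le w\ln(2+2ab/w)\le w\ln(\frac w2+2)$;
\item the final comparison reduces to $\ln(\frac w2+2)>\ln 4+\frac14$, i.e.\ $\frac w2+2>4e^{1/4}\approx 5.14$, which holds comfortably for $w\ge 11$.
\end{itemize}

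What your route buys is uniformity. It is exactly the template the paper reserves for $r\ge 5$, so all cases $r\neq 3$ are handled the same way. It also quantifies the deficit: for $r=2$ your bound sits below $g(w)$ by $\ln(\frac w2+2)-\ln4-\frac14$.

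What it costs is self-containment. It leans on Lemma~\ref{lem:T3lower}, and hence on $w\ge 11$. The paper's two exact ratio computations need nothing beyond $d\ge 4$ and $w\ge 3$. They are shorter, and they would apply equally in the small-$w$ range discussed in the concluding remarks.
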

\begin{proof}
We first show that $r\neq 1$. Suppose, to the contrary, that $r=1$. Then $x_1=w=n-d-1$. 
Since $(w-1,1)\in\mathcal{A}_w^2\subseteq\mathcal{A}_w^{d-1}$, we have 
\[ 
\begin{split} 
\frac{\varphi(w-1,1)}{\varphi(w)} 
&= 
w\left(\frac{3}{2}\right)^{w-1} 
>1. 
\end{split} 
\] 
Thus $\varphi(w-1,1)>\varphi(w)$. This contradicts the fact that $(x_1)=(w)$ maximizes $\varphi$ over
$\mathcal{A}_w^{d-1}$. Therefore $r\neq1$.

Next we show that $r\neq2$. Suppose that $r=2$ and write $(x_1,x_2)=(a,b)$.
By reversing the sequence if necessary, assume that $b\ge a$.
Since $a+b=w\ge11$, we have $b\ge2$, and hence
$(1,a,b-1)\in\mathcal A_w^3\subseteq\mathcal A_w^{d-1}$.
However,
$$
\frac{\varphi(1,a,b-1)}{\varphi(a,b)}
=\frac{2b}{b+1}>1,
$$
contradicting the maximality of $(a,b)$.
Therefore $r\neq2$.
\end{proof}

Now we exclude the case $r=4$.
\begin{lemma}\label{xh5}
$r\neq 4$.
\end{lemma}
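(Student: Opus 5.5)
The plan is to compare directly with the balanced three-part sequence rather than to exhibit a local improving move. Suppose, to the contrary, that $r=4$, and write $(x_1,x_2,x_3,x_4)=(a,b,c,e)$ with $a,b,c,e\ge1$ and $a+b+c+e=w$. Since $d\ge4$, we have $\mathcal{A}_w^3\subseteq\mathcal{A}_w^{d-1}$. The sequence $(\alpha,\beta,\gamma)$ of Lemma~\ref{lem:T3lower} lies in $\mathcal{A}_w^3$, and $\varphi(\alpha,\beta,\gamma)=T_3(w)$ by \eqref{eq2}. So it suffices to prove $\ln\varphi(a,b,c,e)<g(w)$, because Lemma~\ref{lem:T3lower} gives $g(w)<\ln T_3(w)$. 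Lemmas~\ref{lem:T3lower} and~\ref{lem:FQ} appear tailored to exactly this comparison.

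\textbf{The power part.} By \eqref{eq2},
\[
\varphi(a,b,c,e)=(1+a)(1+b)(1+c)(1+e)\,(2+b)^a(2+a+c)^b(2+b+e)^c(2+c)^e .
\]
Set $Q=ab+bc+ce$. I apply the weighted AM--GM inequality to the last four factors, with weights $a/w,b/w,c/w,e/w$. The weighted mean of the bases is
\[
\frac{a(2+b)+b(2+a+c)+c(2+b+e)+e(2+c)}{w}=2+\frac{2Q}{w},
\]
so the power part is at most $\bigl(2+\tfrac{2Q}{w}\bigr)^w$.

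\textbf{The linear part.} Here I merge the two end factors: $(1+a)(1+e)=1+a+e+ae$. Since $(a+c)(b+e)=Q+ae$ and $(a+c)(b+e)\le w^2/4$, we get $ae\le w^2/4-Q$. Applying AM--GM to the three factors $1+a+e+ae$, $1+b$ and $1+c$, whose sum is $3+w+ae\le w+\frac{w^2}{4}-Q+3$, gives
\[
\ln\varphi(a,b,c,e)\le 3\ln\Bigl(\frac{w+\frac{w^2}{4}-Q+3}{3}\Bigr)+w\ln\Bigl(2+\frac{2Q}{w}\Bigr)=f(Q),
\]
where $f$ is exactly the function in the proof of Lemma~\ref{lem:FQ}. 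Whenever $Q\le (w-1)^2/4$, Lemma~\ref{lem:FQ} then gives $f(Q)<g(w)$, and this case is finished.

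\textbf{Main obstacle: the range of $Q$.} Lemma~\ref{lem:FQ} needs $Q\le (w-1)^2/4$, but for four positive parts one only has $Q\le w^2/4-ae\le w^2/4-1$. For instance $a=e=1$ with $b,c$ balanced gives $Q=(b+1)(c+1)-1$, which can exceed $(w-1)^2/4$. I would handle this by extending the monotonicity argument of Lemma~\ref{lem:FQ}. The numerator of $f'(Q)$, namely $\frac{w^3}{4}+w^2-(w+3)Q$, stays positive for all $Q<\frac{w^2(w+4)}{4(w+3)}$, and this threshold exceeds $w^2/4$. So $f$ is increasing on $[0,w^2/4-1]$, and it suffices to check the single endpoint $Q=w^2/4-1$. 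There
\[
f=3\ln\frac{w+4}{3}+w\ln\Bigl(\frac w2+2-\frac2w\Bigr).
\]
Writing $\frac{w+4}{3}=\frac23\bigl(\frac w2+2\bigr)$ and $\frac w2+2-\frac2w=\bigl(\frac w2+2\bigr)\bigl(1-\frac{4}{w(w+4)}\bigr)$, one gets $f-(w+3)\ln(\frac w2+2)\le 3\ln\frac23-\frac{4}{w+4}$ by Lemma~\ref{xsh}(2). This is smaller than $-\ln4-\frac14$, since $3\ln\frac23\approx-1.216$ and $-\ln 4-\tfrac14\approx-1.636$. At first sight it looks like it should be close, but the extra $-\frac{4}{w+4}$ makes up the gap. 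Precisely, the needed inequality $\frac{4}{w+4}>3\ln\frac32-\ln4-\frac14\approx-0.42$ holds trivially, because the left side is positive.

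In all cases $\varphi(a,b,c,e)<\varphi(\alpha,\beta,\gamma)$. This contradicts the maximality of $(x_1,\dots,x_4)$ over $\mathcal{A}_w^{d-1}$, so $r\neq4$. The only genuinely delicate point is this endpoint verification beyond the range of Lemma~\ref{lem:FQ}. If it failed, the fallback would be a direct case analysis for $a=e=1$, for example comparing against the sequence $(b+1,c+1)$ padded to a three-part sequence.
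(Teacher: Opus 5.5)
\textbf{The endpoint check fails.} Your two AM--GM steps are correct and do give $\ln\varphi(a,b,c,e)\le f(Q)$. Also, $f$ is indeed increasing up to $Q=w^2/4-1$. The argument breaks at the endpoint verification.

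You need $3\ln\frac23-\frac{4}{w+4}<-\ln4-\frac14$, that is, $\frac{4}{w+4}>3\ln\frac23+\ln4+\frac14\approx0.42$. You wrote the right-hand side as $3\ln\frac32-\ln4-\frac14\approx-0.42$, which flips its sign. Equivalently, $3\ln\frac23\approx-1.216$ is \emph{larger} than $-\ln4-\frac14\approx-1.636$, not smaller. The correct inequality holds only for $w\le5$.

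This is not a loose estimate. Since $w\ln\bigl(1-\frac{4}{w(w+4)}\bigr)=-\frac{4}{w+4}+O(w^{-2})$, the value $f(w^2/4-1)$ genuinely exceeds $g(w)$ for every $w\ge11$. So your argument covers only $Q\le(w-1)^2/4$ (via Lemma~\ref{lem:FQ}) and slightly beyond. It leaves open a whole interval of $Q$ below $w^2/4-1$. For large $w$ this interval contains all four-part sequences with $w^2/4-Q$ below roughly $0.3w$, not only those with $a=e=1$, so your fallback is too narrow.

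\textbf{The reduction to $g(w)$ is itself false for $r=4$.} No sharpening within your framework can help, because the sufficient condition $\ln\varphi(a,b,c,e)<g(w)$ fails. For $w=11$ and $(a,b,c,e)=(1,4,5,1)$ one has $\varphi=720\cdot2^{12}\cdot7^6$, so $\ln\varphi\approx26.57248$, while $g(11)=14\ln7.5-\ln4-\frac14\approx26.57235$. The lemma survives there only because $\varphi(1,4,5,1)/T_3(11)=15/16$. That margin is smaller than the slack lost in passing from $T_3$ to $g$. Recall also that at $w=10$ the four-part sequence $(1,4,4,1)$ actually beats every three-part sequence, so $r=4$ is genuinely competitive near $w=11$.

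For large $w$ the linear-part AM--GM is also too lossy. At $(1,\frac w2-1,\frac w2-1,1)$ the three factors are $4,\frac w2,\frac w2$. The difference $f(w^2/4-1)-\ln T_3(w)$ tends to $\ln\frac{32}{27}>0$, so comparing with $T_3(w)$ instead of $g(w)$ would not rescue the bound either.

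\textbf{What the paper does instead.} This is why the paper does not use $g$ for $r=4$. After reversing so that $a+c\ge b+h$, it compares $(a,b,c,h)$ directly with the three-part sequence $(p,b+h,q)$, where $p+q=a+c$ is balanced.
\begin{itemize}
\item It computes the exact ratio \eqref{eq3}.
\item It uses monotonicity in $b$ to set $b=m-1-h$, with $m=a+c+1\ge7$.
\item For $ah\ge2$ it bounds the ratio by $\Psi_m(ah)<1$.
\item For $a=h=1$ it uses $(m+1)^2(m+2)-16(m-1)^2=(m-7)(m^2-5m+2)$, plus the floor refinement at $m=7$.
\end{itemize}
This comparison keeps the exact product structure instead of a universal threshold. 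Your fallback idea of comparing with a nearby three-part sequence points in this direction, but it has to be carried out for all four-part sequences.
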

\begin{proof}
Suppose that $r=4$. Write $(x_1,x_2,x_3,x_4)=(a,b,c,h)$, where
$a,b,c,h\in\mathbb{N}^+$  and
$a+b+c+h=w$.
By the definition of $\varphi$, we have
\[
\begin{aligned}
\varphi(a,b,c,h)
= (a+1)(b+1)(c+1)(h+1)
(b+2)^a(a+c+2)^b(b+h+2)^c(c+2)^h.
\end{aligned}
\]

By reversing the order if necessary, we may assume that $a+c\geq b+h$. Since $a+b+c+h=w\geq11$, it follows that
$a+c\geq6$. Choose $p,q \in \mathbb{N}^+$ such that
$p+q=a+c$ and $|p-q|\leq1$.
Then $(p,b+h,q)\in\mathcal{A}_w^3
\subseteq\mathcal{A}_w^{d-1}$. 
By the definition of $\varphi$,
\[
\varphi(p,b+h,q)
=
(p+1)(b+h+1)(q+1)
(b+h+2)^{a+c}(a+c+2)^{b+h}.
\]
Since $p$ and $q$ are as equal as possible,
$(p+1)(q+1)
=
\left\lfloor\frac{(a+c+2)^2}{4}\right\rfloor$.
Hence
\begin{equation}\label{eq3}
\begin{aligned}
\frac{\varphi(a,b,c,h)}
{\varphi(p,b+h,q)}
=
\frac{(a+1)(b+1)(c+1)(h+1)}
{(b+h+1)\left\lfloor (a+c+2)^2/4\right\rfloor}
\left(\frac{b+2}{b+h+2}\right)^a
\left(\frac{c+2}{a+c+2}\right)^h.
\end{aligned}
\end{equation}
For every integer $a+c$, we have
$\left\lfloor\frac{(a+c+2)^2}{4}\right\rfloor
\geq
\frac{(a+c+1)(a+c+3)}{4}$.
Therefore
\begin{equation}\label{eq4}
\begin{aligned}
\frac{\varphi(a,b,c,h)}
{\varphi(p,b+h,q)}
\leq
\frac{4(a+1)(b+1)(c+1)(h+1)}
{(b+h+1)(a+c+1)(a+c+3)}
\left(\frac{b+2}{b+h+2}\right)^a
\left(\frac{c+2}{a+c+2}\right)^h.
\end{aligned}
\end{equation}
Put $m=a+c+1$. Then $m\ge7$ and $c=m-1-a$.
For fixed $a,c,h$, the right-hand side of \eqref{eq4}
is strictly increasing in $b$, since both
$(b+1)/(b+h+1)$ and $(b+2)/(b+h+2)$
are strictly increasing in $b$.
Since $b+h\le a+c$, we have $b\le m-1-h$.
We therefore bound the right-hand side of \eqref{eq4}
by replacing $b$ with $m-1-h$, obtaining
\begin{equation}\label{eq5}
\frac{\varphi(a,b,c,h)}
{\varphi(p,b+h,q)}
\leq
\frac{4(a+1)(h+1)(m-a)(m-h)}
{m^2(m+2)}
\left(1-\frac{h}{m+1}\right)^a
\left(1-\frac{a}{m+1}\right)^h.
\end{equation}

We now estimate the right-hand side of \eqref{eq5}. Let $\rho=ah$.
By Lemma \ref{xsh} (3),
\[
\begin{aligned}
a\ln\left(1-\frac{h}{m+1}\right)
+h\ln\left(1-\frac{a}{m+1}\right)
\leq
-\frac{2\rho}{m+1}
-\frac{\rho(a+h)}{2(m+1)^2}\leq
-\frac{2\rho}{m+1}
-\frac{\rho^{3/2}}{(m+1)^2},
\end{aligned}
\]
where the last inequality follows from $a+h\geq2\sqrt{ah}=2\sqrt{\rho}$.
Consequently,
\begin{equation}\label{eq6}
\frac{\varphi(a,b,c,h)}
{\varphi(p,b+h,q)}
\leq
\frac{4(a+1)(h+1)(m-a)(m-h)}
{m^2(m+2)}
\exp\left(
-\frac{2\rho}{m+1}
-\frac{\rho^{3/2}}{(m+1)^2}
\right).
\end{equation}
Moreover, by $4xy\leq(x+y)^2$, we have
\[
\begin{aligned}
4m(a+1)(h+1)(m-a)(m-h)
&\leq
\left(m(a+1)(h+1)+(m-a)(m-h)\right)^2\\
&=
(m+1)^2(m+\rho)^2.
\end{aligned}
\]
It follows from \eqref{eq6} that
\begin{equation}\label{eq7}
\frac{\varphi(a,b,c,h)}
{\varphi(p,b+h,q)}
\leq \Psi_m(\rho),
\end{equation}
where
\[
\Psi_m(\rho) \coloneqq
\frac{(m+1)^2(m+\rho)^2}{m^3(m+2)}
\exp\left(
-\frac{2\rho}{m+1}
-\frac{\rho^{3/2}}{(m+1)^2}
\right).
\]

Suppose $\rho\geq2$. Regarding $\Psi_m(\rho)$ as a function of
the real variable $\rho\geq2$, we have
\[
\frac{\mathrm{d}}{\mathrm{d}\rho}\ln \Psi_m(\rho)
=
\frac{2}{m+\rho}
-\frac{2}{m+1}
-\frac{3\sqrt{\rho}}{2(m+1)^2}
<0.
\]
Thus $\Psi_m(\rho)$ is strictly decreasing for $\rho\geq2$, and hence 
\[\Psi_m(\rho)\leq
\Psi_m(2)
=
\frac{(m+1)^2(m+2)}{m^3}
\exp\left(
-\frac4{m+1}
-\frac{2\sqrt2}{(m+1)^2}
\right).
\]
By Lemma \ref{xsh} (4),
we obtain
\begin{equation*}
 \begin{split}
\ln \Psi_m(2)={}&
2\ln\left(1+\frac1m\right)
+\ln\left(1+\frac2m\right)
-\frac4{m+1}
-\frac{2\sqrt2}{(m+1)^2}\\
\leq{}&
\frac2m-\frac1{m(m+1)}
+\frac2m-\frac2{m(m+2)}
-\frac4{m+1}
-\frac{2\sqrt2}{(m+1)^2}\\
={}&
\frac{m+4}{m(m+1)(m+2)}
-\frac{2\sqrt2}{(m+1)^2}\\
<{}&\frac2{(m+1)^2}-\frac{2\sqrt2}{(m+1)^2}<{}0 ,
 \end{split}
\end{equation*}
where the second inequality holds as $m\geq7$. Hence
$\Psi_m(\rho)<1$ for $\rho\geq2$. By \eqref{eq7},
\[
\frac{\varphi(a,b,c,h)}
{\varphi(p,b+h,q)}
<1.
\]

It remains to consider $\rho=1$. Since $\rho=ah$ and $a, h \in \mathbb{N}^+$, $a = h = 1$. Then \eqref{eq5} gives
\[
\frac{\varphi(a,b,c,h)}
{\varphi(p,b+h,q)}
\leq
\frac{16(m-1)^2}{(m+1)^2(m+2)}
\leq1,
\]
because $(m+1)^2(m+2)-16(m-1)^2=(m-7)(m^2-5m+2)\geq0$ with $m\geq7$.
The inequality is strict if $m>7$. Finally, suppose that $m=7$. Then $a+c=6$, and the floor estimate used
in passing from \eqref{eq3} to \eqref{eq4} is strict:
\[
\frac{(a+c+1)(a+c+3)}{4}
=
\frac{63}{4}
<
16
=
\left\lfloor\frac{(a+c+2)^2}{4}\right\rfloor.
\]
Hence
\[
\frac{\varphi(a,b,c,h)}
{\varphi(p,b+h,q)}
\leq
\frac{63}{64}
<1.
\]

Thus $\varphi(a,b,c,h)<\varphi(p,b+h,q)$. Since $(p,b+h,q)\in\mathcal{A}_w^3\subseteq\mathcal{A}_w^{d-1}$,
this contradicts the fact that
$(x_1,x_2,x_3,x_4)=(a,b,c,h)$ maximizes $\varphi$ over
$\mathcal{A}_w^{d-1}$. Therefore $r\neq4$.
\end{proof}

For any $(a_1,a_2,\ldots,a_r)\in\mathcal{A}_w$, let  
$a_0=a_{r+1}=0$ and   $Q=\sum_{i=1}^{r-1}a_i a_{i+1}$. Taking logarithms in the definition of $\varphi$   
and applying
Jensen's inequality  with weights $a_i/w$ gives   
\begin{equation}\label{eq8}   
\begin{split}   
\ln\varphi(a_1,a_2,\ldots,a_r)   
&=\sum_{i=1}^{r}\ln(1+a_i)   
+\sum_{i=1}^{r}a_i\ln(2+a_{i-1}+a_{i+1})\\   
&\le   
\sum_{i=1}^{r}\ln(1+a_i)   
+w\ln\left(   
\frac1w\sum_{i=1}^{r}   
 a_i(2+a_{i-1}+a_{i+1})   
\right)\\   
&=   
\sum_{i=1}^{r}\ln(1+a_i)   
+w\ln\left(   
2+\frac{2}{w} Q  
\right).   
\end{split}   
\end{equation}   
Since $a_i\geq1$ and $\sum_{i=1}^{r}a_i=w$, we have

\[
\begin{aligned}
\sum_{i=1}^{r-1}(a_i-1)(a_{i+1}-1)
&\le
\left(\sum_{i\ \mathrm{odd}}(a_i-1)\right)
\left(\sum_{j\ \mathrm{even}}(a_j-1)\right)\\
&\le
\frac14\left(\sum_{i=1}^{r}(a_i-1)\right)^2
=
\frac{(w-r)^2}{4}.
\end{aligned}
\]
Then
\begin{equation}\label{eq9}
\begin{aligned}
Q
&=
\sum_{i=1}^{r-1}
\left[
1+(a_i-1)+(a_{i+1}-1)
+(a_i-1)(a_{i+1}-1)
\right]\\
&\le
(r-1)+2(w-r)+\frac{(w-r)^2}{4}.
\end{aligned}
\end{equation}
Let $A=\sum_{i\ {\rm odd}}a_i$ and
$B=\sum_{j\ {\rm even}}a_j$. Then
\begin{equation}\label{eq10}   
\begin{split}   
\sum_{\substack{i\ {\rm odd},\ j\ {\rm even}\\|i-j|\ne1}}
a_i a_j&=AB-Q=\frac14\left(   
A+ B 
\right)^2-\frac14\left(   
A-B  
\right)^2-Q\leq\frac{w^{2}}{4}-Q.   
\end{split}   
\end{equation}

We now exclude the remaining cases $r\ge5$.
Throughout the following lemmas, let
$\alpha,~\beta,~\gamma,~T_3(w)$ and $g(w)$
be as defined in Lemma~\ref{lem:T3lower}.
Since $d\ge4$, we have
$(\alpha,\beta,\gamma)\in\mathcal A_w^3
\subseteq\mathcal A_w^{d-1}$. Also, by the definition of $\varphi$, $T_3(w)=\varphi(\alpha,\beta,\gamma)$.

\begin{lemma}\label{xh6}
$r \neq 5,6$.
\end{lemma}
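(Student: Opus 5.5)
The plan is to show that for $r\in\{5,6\}$ we have $\ln\varphi(x_1,\ldots,x_r)\le g(w)$. Lemma~\ref{lem:T3lower} then gives $\varphi(x_1,\ldots,x_r)<T_3(w)=\varphi(\alpha,\beta,\gamma)$. Since $(\alpha,\beta,\gamma)\in\mathcal A_w^{d-1}$, this contradicts the maximality of $(x_1,\ldots,x_r)$. The tool is the Jensen bound \eqref{eq8}, which controls the second sum through $Q$, combined with a bound on $\sum\ln(1+a_i)$ that exploits the non-adjacent products appearing in \eqref{eq10}.

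First I bound $\sum_{i=1}^r\ln(1+a_i)$ in terms of $Q$. With $r=5$ or $6$, group the entries by parity: $A=a_1+a_3+a_5$ and $B=a_2+a_4(+a_6)$. The product $\prod(1+a_i)$ is at most roughly $(1+A/3)^3(1+B/3)^{r-3}$ by AM--GM within each class. That is too crude, so I aim for something of the shape $3\ln\bigl((w+\tfrac{w^2}{4}-Q+3)/3\bigr)$, matching Lemma~\ref{lem:FQ}. The idea is to pair the $r$ factors $(1+a_i)$ into three groups and use $(1+a_i)(1+a_j)=1+a_i+a_j+a_ia_j$.

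For $r=6$, take the pairs $(1,4)$, $(2,5)$, $(3,6)$, each of which is a non-adjacent opposite-parity pair. Their product sum is at most $\frac{w^2}{4}-Q$ by \eqref{eq10}. AM--GM over the three pairs then gives
\[
\prod_{i=1}^6(1+a_i)\le\Bigl(\frac{3+w+\sum a_ia_j}{3}\Bigr)^3\le\Bigl(\frac{w+\frac{w^2}{4}-Q+3}{3}\Bigr)^3 .
\]
For $r=5$, use the pairs $(1,4)$ and $(2,5)$, with $a_3$ left alone, so that $1+a_3$ is the third factor. Again by \eqref{eq10}, the sum of the group values is at most $3+w+\frac{w^2}{4}-Q$. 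In both cases AM--GM yields exactly the first term of Lemma~\ref{lem:FQ}.

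Next I verify the hypothesis $Q\le\frac{(w-1)^2}{4}$ of Lemma~\ref{lem:FQ}. By \eqref{eq9} with $r\ge5$, $Q\le (r-1)+2(w-r)+\frac{(w-r)^2}{4}$. This expression decreases in $r$ for $r<w$; at $r=5$ it equals $\frac{(w-5)^2}{4}+2w-6$. Comparing this with $\frac{(w-1)^2}{4}$ reduces to $-2w+6\le 0$, which holds. When $Q<0$ fails the range is not an issue, since $Q\ge 0$ trivially. Combining \eqref{eq8}, the bound on the first sum, and Lemma~\ref{lem:FQ} gives $\ln\varphi<g(w)<\ln T_3(w)$, the desired contradiction.

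The main obstacle is the pairing step. One must check that the chosen pairs are genuinely non-adjacent and of opposite parity, so that \eqref{eq10} applies. For $r=5$ the asymmetric grouping has to be checked with the same care. If the pairing fails to cover a case, I would fall back on a direct merging comparison as in Lemma~\ref{xh5}.
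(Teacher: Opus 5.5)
Your proposal is correct and is essentially the paper's proof. It uses the same bound $Q\le\frac{(w-1)^2}{4}$ from \eqref{eq9} and the same pairings: $(1,4),(2,5)$ with $1+a_3$ left alone for $r=5$, and $(1,4),(2,5),(3,6)$ for $r=6$, bounded via \eqref{eq10}. These are combined through AM--GM with \eqref{eq8}, and then Lemmas~\ref{lem:FQ} and~\ref{lem:T3lower} finish the argument. One small algebra slip: at $r=5$ the bound from \eqref{eq9} equals $\frac{(w-1)^2}{4}$ exactly (the difference is $0$, not $-2w+6$), which still gives the needed hypothesis of Lemma~\ref{lem:FQ}.
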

\begin{proof}
Assume that $r\in\{5,6\}$ and write
$(x_1,x_2,\ldots,x_r)=(a_1,a_2,\ldots,a_r)$.
By \eqref{eq9}, 
\[
Q\le
\begin{cases}
\dfrac{(w-1)^2}{4}, & r=5;\\[2mm]
\dfrac{w^2}{4}-w+2\le\dfrac{(w-1)^2}{4}, & r=6.
\end{cases}
\]
Thus, in both cases,
$Q\le\frac{(w-1)^2}{4}$.

We next estimate $\prod_{i=1}^{r}(1+a_i)$. If $r=5$, set
$U_1=1+a_3$,
$U_2=(1+a_1)(1+a_4)$ and
$U_3=(1+a_2)(1+a_5)$.
By \eqref{eq10}, we have
\[
U_1+U_2+U_3
=
w+3+a_1a_4+a_2a_5
\le
w+\frac{w^2}{4}-Q+3.
\]
If $r=6$, set
$U_1=(1+a_1)(1+a_4)$,
$U_2=(1+a_3)(1+a_6)$ and
$U_3=(1+a_5)(1+a_2)$.
By \eqref{eq10}, we have 
\[
U_1+U_2+U_3=
w+3+a_1a_4+a_3a_6+a_5a_2
\le
w+\frac{w^2}{4}-Q+3.
\]
In either case,
$U_1U_2U_3=\prod_{i=1}^{r}(1+a_i)$. Then the AM--GM inequality implies that

\begin{equation}\label{eq12}
\prod_{i=1}^{r}(1+a_i)
\le\left(
\frac{U_1+U_2+U_3}{3}
\right)^3\le
\left(
\frac{w+\frac{w^2}{4}-Q+3}{3}
\right)^3.
\end{equation}

Therefore, by \eqref{eq8} and \eqref{eq12}, we have
\begin{equation}\label{eq13}
\ln\varphi(a_1,a_2,\ldots,a_r)
\le
3\ln\left(
\frac{
w+\frac{w^2}{4}-Q+3
}{3}
\right)
+
w\ln\left(
2+\frac{2Q}{w}
\right).
\end{equation}
By  Lemma \ref{lem:FQ},
$\ln\varphi(a_1,a_2,\ldots,a_r)
<g(w)$.
By Lemma \ref{lem:T3lower},
\[
\ln\varphi(a_1,a_2,\ldots,a_r)
<g(w)<\ln T_3(w)=\ln\varphi(\alpha,\beta,\gamma),
\]
which contradicts the fact that
$(x_1,x_2,\ldots,x_r)
=(a_1,a_2,\ldots,a_r)$
maximizes $\varphi$ over $\mathcal{A}_w^{d-1}$.
Hence $r\neq5,6$.
\end{proof}

\begin{lemma}\label{xh7}
$r\notin\{7,8,\ldots,d-1\}$ with $r\le w/4$.
\end{lemma}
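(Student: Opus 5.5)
We argue by contradiction, exactly as in Lemma \ref{xh6}: assume $7\le r\le d-1$ and $r\le w/4$, write $(x_1,\ldots,x_r)=(a_1,\ldots,a_r)$, and show that $\ln\varphi(a_1,\ldots,a_r)<g(w)$. Then Lemma \ref{lem:T3lower} gives $\ln\varphi(a_1,\ldots,a_r)<\ln T_3(w)=\ln\varphi(\alpha,\beta,\gamma)$. Since $(\alpha,\beta,\gamma)\in\mathcal A_w^{d-1}$, this contradicts maximality.

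\textbf{Bounding the two parts of \eqref{eq8}.} The starting point is \eqref{eq8}, which splits $\ln\varphi$ into $\sum\ln(1+a_i)$ and $w\ln(2+2Q/w)$.

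For the second term, \eqref{eq9} gives
\[
Q\le (r-1)+2(w-r)+\frac{(w-r)^2}{4}=\frac{(w-r+4)^2}{4}-3.
\]
For $r\ge 7$ this is at most $\frac{(w-3)^2}{4}$, so $2+\frac{2Q}{w}\le \frac{(w+1)^2-(2r-6)w+\cdots}{2w}$. Hence the $Q$-term loses roughly a factor $\bigl(1-\frac{r-4}{w/2+2}\bigr)$ inside the logarithm compared with $\ln(\frac w2+2)$. Multiplied by $w$, this is a deficit of order $2(r-4)$.

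For the first term, the crude bound would be AM--GM on the $r$ factors: $\prod(1+a_i)\le\bigl(\frac{w+r}{r}\bigr)^r$. That gives $\sum\ln(1+a_i)\le r\ln\frac{w+r}{r}$, which grows with $r$ and competes with the deficit above. Since $r\le w/4$, we have $\frac{w+r}{r}\ge 5$, so this crude bound is larger than the $3\ln(\frac w2+2)$ we can afford. It must therefore be paired with the sharper control of $Q$ available once many parts are present.

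\textbf{Coupling the two terms.} Instead of the worst case for $Q$, treat $\ln\varphi\le r\ln\frac{w+r}{r}+w\ln(2+2Q/w)$ with $Q$ bounded as above, and compare with $g(w)=(w+3)\ln(\frac w2+2)-\ln4-\frac14$. Writing $2+\frac{2Q}{w}\le(\frac w2+2)\bigl(1-\frac{2(r-4)}{w+4}+O(w^{-1})\bigr)$ and applying Lemma \ref{xsh}(2), the difference $\ln\varphi-g(w)$ is bounded by
\[
r\ln\frac{w+r}{r}-3\ln\Bigl(\frac w2+2\Bigr)-\frac{2(r-4)w}{w+4}+\ln4+\frac14+\text{small}.
\]
To finish, we show this quantity is negative for $7\le r\le w/4$, $w\ge 11$.

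\textbf{Main obstacle.} The hard step is this final inequality. The function $h(r)=r\ln\frac{w+r}{r}-2r$ is concave in $r$, with derivative $\ln\frac{w+r}{r}-\frac{w}{w+r}-2$. This derivative is positive at small $r$ when $w$ is large, so $h$ is not simply monotone. The endpoint $r=w/4$ gives roughly $\frac w4\ln5-\frac w2<0$ growth, which is fine, but near $r\approx w/e^3$ the term $r\ln\frac{w+r}{r}$ can be of order $0.05w$, which exceeds $3\ln w$ for large $w$.

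So AM--GM on all $r$ factors is likely too weak. I would first try to strengthen it by pairing the factors $(1+a_i)$ non-adjacently, as in Lemma \ref{xh6}, into three groups using \eqref{eq10}. This gives
\[
\prod(1+a_i)\le\Bigl(\frac{\sum_{\text{groups}}U_k}{3}\Bigr)^3,
\]
where the $U_k$ are products over independent index sets. Their sum is controlled by $e_{\text{nonadj}}$-type symmetric sums, bounded via $\frac{w^2}{4}-Q$ and the elementary symmetric bound $\prod(1+a_i)\le\sum_S\prod_{i\in S}a_i$.

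If that pairing proves too lossy, I would instead exploit that the hypothesis $r\le w/4$ forces an average part size of at least $4$. With that, I would combine a direct estimate $\ln(1+a_i)\le\ln a_i+\frac1{a_i}$ with the stronger Jensen deficit coming from the variance of the neighbor sums $a_{i-1}+a_{i+1}$. That variance is large precisely when $r$ is large relative to the optimal three-block structure.
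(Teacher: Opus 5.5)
\textbf{There is a genuine gap: the step you flag as the main obstacle is the whole content of the lemma, and it is left open.} Your outer framework is the paper's: contradiction via $\ln\varphi<g(w)<\ln T_3(w)$, with the $Q$-term handled through \eqref{eq9} and Lemma~\ref{xsh}(2). Your diagnosis is correct, and it is stronger than you say: no argument that bounds the two terms of \eqref{eq8} \emph{independently} can work. The bound \eqref{eq9} is attained by two adjacent large entries with all others equal to $1$. The bound $(1+\frac wr)^r$ is essentially attained by equal entries. Adding the two maxima at $r\approx w/20$ overshoots $g(w)$ by roughly $0.05w-3\ln w$, which is positive for large $w$.

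Your suggested repairs do not supply the needed coupling:
\begin{itemize}
\item Splitting all $r\ge7$ factors into three groups forces some group to contain at least three indices, hence two of the same parity. Such products are not controlled by $N\le\frac{w^2}{4}-Q$ from \eqref{eq10}. For example, with $r=7$, $a_1=a_3\approx w/4$, $a_2\approx w/2$ and $a_4=\cdots=a_7=1$, one has $\frac{w^2}{4}-Q\approx\frac74w$ while $a_1a_3\approx\frac{w^2}{16}$. Making such groups harmless requires a separate proof that the extra entries are small, and that is exactly what is missing.
\item Your ``bound'' $\prod(1+a_i)\le\sum_S\prod_{i\in S}a_i$ is just the expansion identity and gives nothing.
\item The variance/Jensen-deficit alternative is not quantified.
\end{itemize}

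\textbf{The missing idea is the paper's Claim~\ref{xshxsh-1}.}
\begin{itemize}
\item Only six entries are paired: the three largest odd-indexed and the three largest even-indexed ones. Hall's theorem matches them so that $|i_t-j_{\sigma(t)}|\ne1$, hence $\sum_t a_{i_t}a_{j_{\sigma(t)}}\le N$.
\item The leftover mass satisfies $R\le\frac{4(r-5)}{(r-3)w}\bigl(\frac{w^2}{4}-Q\bigr)$. This holds because each even-indexed entry is adjacent to at most two odd-indexed ones, and every leftover odd entry is at most the third largest; the even side is symmetric.
\item AM--GM over the three pair products and the $r-6$ leftover factors then bounds $\ln\prod(1+a_i)$. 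Writing $z=\frac{w^2}{4}-Q$, this bound increases in $z$ at rate below $\frac{7}{2w}$ on the relevant range.
\item Meanwhile $w\ln\bigl(\frac w2+2-\frac{2z}{w}\bigr)$ decreases at rate above $\frac4w$ once $z>w$. So $H_{w,r}$ is decreasing, one may substitute $z=\Delta_r(w)$, and everything reduces to the one-variable inequality $F(r)<-\frac14$ using $w\ge4r$.
\end{itemize}

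\textbf{Arithmetic slip.} $(r-1)+2(w-r)+\frac{(w-r)^2}{4}=\frac{(w-r+4)^2}{4}+r-5$, not $\frac{(w-r+4)^2}{4}-3$. So for $r=7$ your bound on $Q$ is not at most $\frac{(w-3)^2}{4}$. Use the exact $\Delta_r(w)$ of \eqref{eq14}, since the final numerical margins are thin.
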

\begin{proof}
Suppose, to the contrary, that
$r\in\{7,8,\ldots,d-1\}$ with $r\le w/4$, and write
$(x_1,x_2,\ldots,x_r)=(a_1,a_2,\ldots,a_r)$. By \eqref{eq9},
\begin{equation}\label{eq14}
\frac{w^2}{4}-Q\ge
\Delta_r(w) \coloneqq
\frac{r-4}{2}w-\frac{r^2}{4}+r+1.
\end{equation}
Since $a_i\ge1$ and $\sum_{i=1}^r a_i=w$, we have  $r\le w$. Recall that $A=\sum_{i\ {\rm odd}}a_i$ and
$B=\sum_{j\ {\rm even}}a_j$, and write
\[
N=
\sum_{\substack{i\ {\rm odd},\,j\ {\rm even}\\|i-j|\ne1}}
a_i a_j.
\]
By \eqref{eq10}, we have
$N\le\frac{w^2}{4}-Q$. In order to show that $\ln\varphi(a_1,\ldots,a_r)<g(w)<\ln T_3(w)$, we need the following claim.

\vskip 0.25cm
    \begin{claim}\label{xshxsh-1}
        For $r\in \{7,8,\ldots,d-1\}$,
\begin{equation*}
\prod_{i=1}^{r}(1+a_i)
\le
\left(
\frac{w+\frac{w^2}{4}-Q+3}{3}
\right)^3
\left(
1+\frac{4(r-5)\left(\frac{w^2}{4}-Q\right)}
{(r-3)(r-6)w}
\right)^{r-6}.
\end{equation*}
\end{claim}
\begin{proof}[\em\textbf {Proof of Claim \ref{xshxsh-1}}]Choose the three largest $a_i$ with odd indices and the three largest
$a_i$ with even indices. Let $R_{\rm odd}$ and $R_{\rm even}$ be the
sums of the remaining entries with odd and even indices, respectively,
and put $R=R_{\rm odd}+R_{\rm even}$.

Let $a_{i_1}\ge a_{i_2}\ge a_{i_3}$ be the three selected entries with odd
indices. For every even index $j$, at most two odd indices $i$ satisfy
$|i-j|=1$. Hence
\begin{equation}\label{x-1}
\sum_{\substack{i\ {\rm odd}\\|i-j|\ne1}}a_{i}
\ge
A-a_{i_1}-a_{i_2}
=
a_{i_3}+R_{\rm odd}.
\end{equation}
Since every remaining entry with an odd index is at most $a_{i_3}$, and
there are $\lceil r/2\rceil-3$ such entries, we have
$
R_{\rm odd}
\le
\left(\left\lceil\frac r2\right\rceil-3\right)a_{i_3}$. It follows from \eqref{x-1} that
\[
\sum_{\substack{i\ {\rm odd}\\|i-j|\ne1}}a_i
\ge
\frac{\left\lceil r/2\right\rceil-2}
{\left\lceil r/2\right\rceil-3}R_{\rm odd}.
\]
Multiplying this inequality by $a_j$ and summing over all even indices
$j$, we obtain
\[
\begin{aligned}
N
=
\sum_{j\ {\rm even}}
a_j
\sum_{\substack{i\ {\rm odd}\\|i-j|\ne1}}a_i\ge
B\frac{\left\lceil r/2\right\rceil-2}
{\left\lceil r/2\right\rceil-3}R_{\rm odd}.
\end{aligned}
\]
Hence
\[
R_{\rm odd}
\le
\frac{\left\lceil r/2\right\rceil-3}
{\left\lceil r/2\right\rceil-2}\frac{N}{B}
\le
\frac{r-5}{r-3}\frac{N}{B}.
\]
Similarly, interchanging odd and even indices, we obtain $R_{\rm even}
\le
\frac{r-5}{r-3}\frac{N}{A}$, where $R_{\rm even}=0$ when $r=7$. Therefore
\begin{equation}\label{eq15}
\begin{aligned}
R\le
\frac{r-5}{r-3}
N\left(\frac1A+\frac1B\right)=
\frac{r-5}{r-3}\frac{Nw}{AB}\le
\frac{4(r-5)}{(r-3)w}
\left(\frac{w^2}{4}-Q\right) .
\end{aligned}
\end{equation}
The last inequality follows from 
\begin{align*}
4\left(\frac{w^2}{4}-Q\right)AB-Nw^2 &=
4\left(\frac{w^2}{4}-Q\right)AB-(AB-Q)(A+B)^2\\
&= \left((A + B)^2 - 4Q\right)AB-(AB-Q)(A+B)^2\\
&= Q(A-B)^2
\ge0.
\end{align*}

Let $j_1,j_2,j_3$ be the indices corresponding to the three selected
entries with even indices. Consider the bipartite graph with vertex
classes $\{i_1,i_2,i_3\}$ and
$\{j_1,j_2,j_3\}$, where $i_t$ is joined to $j_s$ whenever
$|i_t-j_s|\ne1$. We claim that this bipartite graph has a perfect matching. By Hall's theorem, it suffices to verify that every
subset of $\{i_1,i_2,i_3\}$ has at least as many neighbors in
$\{j_1,j_2,j_3\}$ as its cardinality.

Each selected odd index has at least one neighbor, since for a fixed
$i_t$ there are at most two even indices $j_s$ satisfying
$|i_t-j_s|=1$. Any two selected odd indices have at least two
neighbors; otherwise, two selected even indices would each satisfy
$|i_t-j_s|=1$ for both odd indices, which is impossible since two
distinct odd indices have at most one common even index at distance
one. Finally, the three selected odd indices have all three selected
even indices as neighbors, since a fixed even index can be at distance
one from at most two odd indices. Thus Hall's condition holds. Hence
there is a permutation $\sigma$ of $\{1,2,3\}$ such that
\[
|i_t-j_{\sigma(t)}|\ne1,
\qquad t=1,2,3.
\]
Therefore, by the definition of $N$ and
$N\le\frac{w^2}{4}-Q$,
\[
\sum_{t=1}^3 a_{i_t}a_{j_{\sigma(t)}}
\le
N
\le
\frac{w^2}{4}-Q.
\]
Since the six selected entries have total sum $w-R$,
\[
\begin{aligned}
\sum_{t=1}^3
(1+a_{i_t})(1+a_{j_{\sigma(t)}})
&=
3+\sum_{t=1}^3
(a_{i_t}+a_{j_{\sigma(t)}})
+\sum_{t=1}^3
a_{i_t}a_{j_{\sigma(t)}}\\
&\le
w-R+\frac{w^2}{4}-Q+3.
\end{aligned}
\]

Applying AM--GM to these three products and to the remaining $r-6$
terms, and then using \eqref{eq15}, gives
\begin{equation*}
\begin{aligned}
\prod_{i=1}^{r}(1+a_i)
&\le
\left(
\frac{w-R+\frac{w^2}{4}-Q+3}{3}
\right)^3
\left(1+\frac{R}{r-6}\right)^{r-6}\\
&\le
\left(
\frac{w+\frac{w^2}{4}-Q+3}{3}
\right)^3
\left(
1+\frac{4(r-5)\left(\frac{w^2}{4}-Q\right)}
{(r-3)(r-6)w}
\right)^{r-6}.
\end{aligned}
\end{equation*}
This completes the proof of Claim \ref{xshxsh-1}.
\renewcommand{\qedsymbol}{$\lrcorner$}
\end{proof}

Let
$
H_{w,r}(z) \coloneqq 
3\ln\left(\frac{w+z+3}{3}\right)
+(r-6)\ln\left(
1+\frac{4(r-5)z}{(r-3)(r-6)w}
\right)+
w\ln\left(
\frac w2+2-\frac{2z}{w}
\right)
$. Next we  show that  $H_{w,r}(z)$ is strictly decreasing on
$\left[
\Delta_r(w),\frac{w^2}{4}-Q
\right]$. Since $w\ge4r$ and $r\ge7$,
\begin{align*}
    \Delta_r(w)-w &\ge \frac{7r^2-44r+4}{4}>0,\\
    \Delta_r(w)-\frac{r-6}{4}w &\ge \frac{3r^2-4r+4}{4}>0.
\end{align*}
For every $z\in \left[
\Delta_r(w),\frac{w^2}{4}-Q
\right]$, we observe that $z>w$ and
$z>\frac{r-6}{4}w$.
Moreover, since $Q>0$, we have
$z\le
\frac{w^2}{4}-Q
<
\frac{w^2}{4}$.
According to the above bounds on $z$, we obtain the following estimates:
$$
\frac{3}{w+z+3}<\frac{3}{2w},
\qquad
-\frac{2}{\frac w2+2-\frac{2z}{w}}
<
-\frac4w, \qquad \frac{4(r-5)}
{(r-3)w+\frac{4(r-5)}{r-6}z}
<
\frac2w,
$$
where the last inequality holds as 
\[
(r-3)w+\frac{4(r-5)}{r-6}z
>
2(r-4)w.
\]
Consequently, a straightforward computation gives
\[
\begin{aligned}
H_{w,r}'(z)
=
\frac{3}{w+z+3}
+
\frac{4(r-5)}
{(r-3)w+\frac{4(r-5)}{r-6}z}
-
\frac{2}{\frac w2+2-\frac{2z}{w}}<
\frac{3}{2w}+\frac2w-\frac4w
=
-\frac1{2w}<0.
\end{aligned}
\]
Thus $H_{w,r}(z)$ is strictly decreasing on
$\left[
\Delta_r(w),\frac{w^2}{4}-Q
\right]$. Since
$2+\frac{2Q}{w}
=
\frac w2+2-\frac{2}{w}
\left(\frac{w^2}{4}-Q\right)$,
it follows from \eqref{eq8}, \eqref{eq14} and Claim \ref{xshxsh-1} that
\begin{equation}\label{eq18}
\ln\varphi(a_1,\ldots,a_r)
\le
H_{w,r}\left(\frac{w^2}{4}-Q\right)\le H_{w,r}\bigl(\Delta_r(w)\bigr).
\end{equation}

We now prove that
$H_{w,r}\bigl(\Delta_r(w)\bigr)<g(w)$ with
$w\ge4r$. By the definition of $H_{w,r}$,
\begin{equation}\label{eq19a}
\begin{aligned}
H_{w,r}\bigl(\Delta_r(w)\bigr)
-\left(
(w+3)\ln\left(\frac w2+2\right)-\ln4
\right)
=&
3\ln\left(
\frac{w+\Delta_r(w)+3}
{3(\frac w2+2)}
\right)\\
&+
(r-6)\ln\left(
1+\frac{4(r-5)\Delta_r(w)}
{(r-3)(r-6)w}
\right)\\
&+
w\ln\left(
1-\frac{4\Delta_r(w)}{w(w+4)}
\right)
+\ln4.
\end{aligned}
\end{equation}
Since $\Delta_r(w)=
\frac{r-4}{2}w-\frac{r^2}{4}+r+1$, we estimate the three logarithmic terms in \eqref{eq19a} separately. For the first term, since
$$
\frac{r-2}{3}
-
\frac{w+\Delta_r(w)+3}
{3(\frac w2+2)}
=
\frac{(r-4)(r+8)}{6(w+4)}>0,
$$
we have
\begin{equation}\label{eq19b}
3\ln\left(
\frac{w+\Delta_r(w)+3}
{3(\frac w2+2)}
\right)
<
3\ln\frac{r-2}{3}.
\end{equation}
For the second term, we have
$$
\begin{aligned}
3+\frac{4}{(r-3)(r-6)}
-
\left(
1+\frac{4(r-5)\Delta_r(w)}
{(r-3)(r-6)w}
\right)=
\frac{(r-5)(r^2-4r-4)}
{w(r-3)(r-6)}>0.
\end{aligned}
$$
Thus, combining with Lemma \ref{xsh} (4), we have
\begin{equation}\label{eq19c}
\begin{aligned}
(r-6)\ln\left(
1+\frac{4(r-5)\Delta_r(w)}
{(r-3)(r-6)w}
\right)
&<
(r-6)\ln\left(
3+\frac{4}{(r-3)(r-6)}
\right)\\
&=
(r-6)\ln3
+
(r-6)\ln\left(
1+\frac{4}{3(r-3)(r-6)}
\right)\\
&\leq(r-6)\ln3+\frac{4}{3(r-3)}.
\end{aligned}
\end{equation}
For the last term, since
$$
\frac{\mathrm{d}}{\mathrm{d}w}
\left(
\frac{4\Delta_r(w)}{w+4}
\right)
=
\frac{r^2+4r-36}{(w+4)^2}>0
$$
and $w\ge4r$, we obtain
$$
\frac{4\Delta_r(w)}{w+4}
\ge
\frac{4\Delta_r(4r)}{4r+4}
=
\frac{7r^2-28r+4}{4(r+1)}.
$$
Moreover, by \eqref{eq14} and $Q>0$, we have
$
0<
\frac{4\Delta_r(w)}{w(w+4)}
<
1.
$
Hence, by Lemma \ref{xsh} (2),
\begin{equation}\label{eq19e}
\begin{aligned}
w\ln\left(
1-\frac{4\Delta_r(w)}{w(w+4)}
\right)
\le
-\frac{4\Delta_r(w)}{w+4}\le
-\frac{7r^2-28r+4}{4(r+1)}.
\end{aligned}
\end{equation}

Combining \eqref{eq19a}, \eqref{eq19b}, \eqref{eq19c}, and
\eqref{eq19e}, we obtain
\begin{equation}\label{eq19}
\begin{aligned}
&H_{w,r}\bigl(\Delta_r(w)\bigr)
-\left(
(w+3)\ln\left(\frac w2+2\right)-\ln4
\right)\\
&\quad<
3\ln\frac{r-2}{3}
+(r-6)\ln3
+\frac{4}{3(r-3)}
+\ln4
-\frac{7r^2-28r+4}{4(r+1)}.
\end{aligned}
\end{equation}
Denote the right-hand side of \eqref{eq19} by $F(r)$.
It remains only
to prove that $F(r)<-1/4$ for $r\ge7$. For $r=7$ and $r=8$, a direct computation gives
\[
F(7)\approx-0.3680<-\frac14,
\qquad
F(8)\approx-0.4037<-\frac14.
\]
For  $r\ge8$, differentiating $F(r)$ gives
\begin{equation}\label{eq19f}
\begin{aligned}
F'(r)
&=
\frac3{r-2}
+\ln3
-\frac{4}{3(r-3)^2}
-\frac74
+\frac{39}{4(r+1)^2}<
\frac12+\frac{11}{10}
-\frac74+\frac{13}{108}=
-\frac4{135}<0.
\end{aligned}
\end{equation}
Hence $F(r)$ is strictly decreasing for $r\ge8$. Together with the
values of $F(7)$ and $F(8)$, we obtain
$F(r)<-\frac14 $ for $r\ge7$. It follows from \eqref{eq19} that
\[
H_{w,r}\bigl(\Delta_r(w)\bigr)
<
(w+3)\ln\left(\frac w2+2\right)-\ln4-\frac14
=
g(w).
\]
Together with \eqref{eq18}, this yields
$\ln\varphi(a_1,\ldots,a_r)<g(w)$. By Lemma \ref{lem:T3lower},
\[
\ln\varphi(a_1,a_2,\ldots,a_r)
<g(w)<\ln T_3(w)=\ln\varphi(\alpha,\beta,\gamma),
\]
which contradicts the fact that
$(x_1,x_2,\ldots,x_r)
=(a_1,a_2,\ldots,a_r)$
maximizes $\varphi$ over $\mathcal{A}_w^{d-1}$.
This completes the proof.
\end{proof}

\begin{lemma}\label{xh71}
$r\notin\{7,8,\ldots,d-1\}$ with $r>w/4$.
\end{lemma}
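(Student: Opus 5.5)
We plan to reuse the global estimate \eqref{eq8} and to replace the product bound of Claim~\ref{xshxsh-1}, which is too weak when there are many parts, by a plain AM--GM bound. The point is that when $r>w/4$ the average part $w/r$ is below $4$. The factor $\prod(1+a_i)$ is then only exponential in $r$, while the quantity $Q$ controlling the main term $w\ln(2+2Q/w)$ is forced to be much smaller than $w^2/4$. Concretely, let $(x_1,\dots,x_r)=(a_1,\dots,a_r)$ with $r\ge 7$ and $r>w/4$. By AM--GM,
\[
\sum_{i=1}^r\ln(1+a_i)\le r\ln\left(1+\frac wr\right),
\]
and by \eqref{eq9},
\[
Q\le (r-1)+2(w-r)+\frac{(w-r)^2}{4}=\frac{(w-r+4)^2}{4}-5+r-\ldots,
\]
which we keep in the exact form $(r-1)+2(w-r)+\frac{(w-r)^2}{4}$. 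Substituting both bounds into \eqref{eq8} gives
\[
\ln\varphi(a_1,\dots,a_r)\le K_w(r)\coloneqq r\ln\left(1+\frac wr\right)+w\ln\left(2+\frac{2(r-1)+4(w-r)+\frac{(w-r)^2}{2}}{w}\right).
\]
It then suffices to show $K_w(r)<g(w)$ for all integers $r$ with $\max\{7,w/4\}<r\le w$, since Lemma~\ref{lem:T3lower} gives $g(w)<\ln T_3(w)=\ln\varphi(\alpha,\beta,\gamma)$ and $(\alpha,\beta,\gamma)\in\mathcal A_w^{d-1}$, which is the same contradiction as in Lemmas~\ref{xh6} and~\ref{xh7}.

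To compare $K_w(r)$ with $g(w)=(w+3)\ln(\frac w2+2)-\ln4-\frac14$, we write $t=r/w\in(1/4,1]$. The second term of $K_w(r)$ is roughly $w\ln(\frac w2+2)+2w\ln(1-t)$ plus lower-order corrections, and the first is $w\,t\ln(1+1/t)$. So we need
\[
t\ln(1+1/t)+2\ln(1-t)<0
\]
with an explicit margin, uniformly on $[1/4,1-\varepsilon]$. At $t=1/4$ this quantity is $\approx 0.402-0.575<0$, and it decreases as $t$ grows. We plan to make this rigorous by treating $K_w(r)$ as a function of the real variable $r$ and checking its derivative: the derivative of $r\ln(1+w/r)$ is $\ln(1+w/r)-\frac{w}{r+w}\le\ln 5$, while the derivative of the second term is about $-\frac{w-r}{\cdots}$, which is strongly negative away from $r=w$. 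This would show that $K_w$ is decreasing in $r$ on the relevant range, so it is enough to check $r=\lceil w/4\rceil$ (or $r=7$), which is handled by estimates of the kind in Lemma~\ref{xsh}. The endpoint regime $r$ close to $w$ needs separate handling. There all but $O(1)$ parts equal $1$, so $Q\le 3w$ and $w\ln(2+2Q/w)\le w\ln 8$, which is far below $g(w)\approx w\ln(w/2)$ once $w$ is large.

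The main obstacle is that for small $w$, say $11\le w\le 30$ or so, these asymptotic comparisons are not automatically sharp enough: $\ln 8$ versus $\ln(w/2+2)$ at $w=11$ leaves little room. If the analytic monotonicity argument does not close, the fallback is a finite verification for the small range of $w$, enumerating $r$ and using the exact bound $K_w(r)$. If even that is too weak, we would strengthen the argument by a local exchange. When $r>w/4$ and $r\ge7$, some part equals $1$, $2$ or $3$, and one can look for a modification, such as merging a $1$ at an end of the sequence into its neighbour's opposite-parity position, that strictly increases $\varphi$ while keeping the sequence in $\mathcal A_w^{d-1}$, contradicting maximality directly.
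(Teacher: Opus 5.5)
Your proposal is essentially the paper's proof. It uses the same bound $K_w$ (AM--GM on $\prod(1+a_i)$ together with \eqref{eq9} inserted into \eqref{eq8}) and compares it with $g(w)$ through Lemma~\ref{lem:T3lower}. For small $w$ it relies on a finite computer check (the paper checks $11\le w\le 27$ for all $7\le r\le w$), and for $w\ge 28$ it uses monotonicity in $y=r/w$ followed by evaluation at $y=1/4$. The one real difference is that the paper bounds the first term's derivative by $\ln(1+1/y)-\frac{1}{1+y}\le\frac{1}{2y(1+y)}$ (Lemma~\ref{xsh}(4)) instead of your crude $\ln 5$; a concavity check then gives $K_w'<0$ on all of $[1/4,1]$, so neither your separate regime near $r=w$ nor the exchange fallback is needed (also, your range should read $r\ge 7$, $r>w/4$ rather than $\max\{7,w/4\}<r$, which would omit $r=7$).
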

\begin{proof}
Suppose, to the contrary, that
$r\in\{7,8,\ldots,d-1\}$ with $r>w/4$, and write
$(x_1,x_2,\ldots,x_r)=(a_1,a_2,\ldots,a_r)$.
Since $a_i\ge1$ and $\sum_{i=1}^r a_i=w$, we have $r\le w$. By the AM--GM inequality, we obtain
\begin{equation}\label{AM_GM}
\prod_{i=1}^r(1+a_i)
\le
\left(1+\frac wr\right)^r.
\end{equation}
Let $y=\frac rw$. Then
$\frac14<y\le1$. Moreover, by \eqref{eq9},
\begin{equation}\label{mmm}
\begin{aligned}
2+\frac{2Q}{w}
\le
2+\frac2w
\left(
\frac{w^2}{4}
-\frac{r-4}{2}w
+\frac{r^2}{4}
-r-1
\right)=
6-2y-\frac2w+\frac w2(1-y)^2.
\end{aligned}
\end{equation}
Therefore, by \eqref{eq8}, \eqref{AM_GM} and \eqref{mmm}, we have
\begin{equation}\label{eq21}
\ln\varphi(a_1,\ldots,a_r)\le K_w(y),
\end{equation}
where
\begin{equation*}
K_w(y)
:=
wy\ln\left(1+\frac1y\right)
+w\ln\left(
6-2y-\frac2w+\frac w2(1-y)^2
\right).
\end{equation*}

We first consider $11\le w\le27$. Since $r\ge7$, only finitely many
integer pairs $(w,r)$ satisfy
$11\le w\le27$ and $7\le r\le w$. By executing the following
\texttt{Mathematica} \cite{Mathematica} command:
\begin{lstlisting}[language=Mathematica, frame = trBL, aboveskip=10pt, belowskip=10pt, numbers=none, rulecolor=\color{black}, literate={TableForm}{{\textcolor{blue}{TableForm}}}9]
TableForm@
 Flatten[Table[
   N[r*Log[1 + w/r] + 
     w*Log[6 - 2*r/w - 2/w + w/2 (1 - r/w)^2] - (w + 3) Log[w/2 + 2] +
      Log[4] + 1/4, 100], {w, 11, 27}, {r, 7, w}]]
\end{lstlisting}
one can verify that, for all such pairs,
\[
K_w\left(\frac rw\right)
-
\left(
(w+3)\ln\left(\frac w2+2\right)-\ln4
\right)
<
-\frac14.
\]
Hence, by \eqref{eq21},
\begin{equation}\label{eq23}
\ln\varphi(a_1,\ldots,a_r)<g(w)
\qquad
(11\le w\le27).
\end{equation}

It remains to consider $w\ge28$. We first show that $K_w(y)$ is
strictly decreasing on $[1/4,1]$. Since $1/4\le y\le1$, we have
$6-2y-\frac2w+\frac w2(1-y)^2
\ge
4-\frac2w>0$.
Moreover,
\begin{equation}\label{eq22a}
\frac1wK_w'(y)
=
\ln\left(1+\frac1y\right)
-\frac1{1+y}
-
\frac{w(1-y)+2}
{6-2y-\frac2w+\frac w2(1-y)^2}.
\end{equation}
By Lemma \ref{xsh} (4),
\begin{equation}\label{eq22b}
\ln\left(1+\frac1y\right)-\frac1{1+y}
\le
\frac1{2y(1+y)}.
\end{equation}
It follows from \eqref{eq22a} and \eqref{eq22b} that it suffices
to prove
\begin{equation}\label{eq22c}
2y(1+y)\bigl(w(1-y)+2\bigr)
>
6-2y-\frac2w+\frac w2(1-y)^2.
\end{equation}
Let
$D_w(y) \coloneqq
2y(1+y)\bigl(w(1-y)+2\bigr)
-\left(
6-2y-\frac2w+\frac w2(1-y)^2
\right)$.
Then
\[
D_w''(y)=8-w-12wy<0
\qquad
\left(\frac14\le y\le1,\ w\ge28\right),
\]
so $D_w$ is strictly concave on $[1/4,1]$. Moreover, for $w\ge 28$, we observe that
\[
D_w\left(\frac14\right)
=
\frac{3w^2-68w+32}{16w}
>0,
\qquad
D_w(1)=4+\frac2w>0.
\]
Hence $D_w(y)>0$ for every $y\in[1/4,1]$, and therefore
\eqref{eq22c} holds. Consequently,
$K_w'(y)<0$ for
$\frac14\le y\le1$ and $w\ge28$.

Since $y>1/4$, it follows from \eqref{eq21}  that
$\ln\varphi(a_1,\ldots,a_r)
\le
K_w(y)
<
K_w\left(\frac14\right)$.
Hence
\begin{equation}\label{eq24a}
\begin{aligned}
&\ln\varphi(a_1,\ldots,a_r)
-\left(
(w+3)\ln\left(\frac w2+2\right)-\ln4
\right)\\
&\quad<
\Theta(w) \coloneqq
\frac w4\ln5
+w\ln\left(
\frac{9w^2+176w-64}{16w(w+4)}
\right)
-3\ln\left(\frac w2+2\right)
+\ln4.
\end{aligned}
\end{equation}
Differentiating $\Theta(w)$ and applying $\ln x\le x-1$ to the
logarithmic term gives
\begin{equation}\label{eq24b}
\Theta'(w)
\le
\frac14\ln5-\frac7{16}
-
\frac{353w^3-4416w^2+3264w-1024}
{4w(w+4)(9w^2+176w-64)}.
\end{equation}
Observe that
$\frac14\ln5<\frac7{16}$
and for $w\ge28$,
\[
\begin{aligned}
&353w^3-4416w^2+3264w-1024\\
&\quad=
353(w-28)^3
+25236(w-28)^2
+586224(w-28)
+4377280>0 .
\end{aligned}
\]
Then \eqref{eq24b} yields
$\Theta'(w)<0$, that is, $\Theta(w)$ is decreasing for $w\ge28$.
Therefore
\[
\Theta(w)\le\Theta(28)=
7\ln5
+28\ln\frac{745}{896}
-10\ln2 <
-\frac14.
\]
Together with \eqref{eq24a}, this gives
\begin{equation}\label{eq24}
\ln\varphi(a_1,\ldots,a_r)<g(w)
\qquad
(w\ge28).
\end{equation}

Combining \eqref{eq23} and \eqref{eq24}, we conclude that
$\ln\varphi(a_1,\ldots,a_r)<g(w)$
for every $w\ge11$. By Lemma \ref{lem:T3lower},
\[
\ln\varphi(a_1,a_2,\ldots,a_r)
<g(w)<\ln T_3(w)=\ln\varphi(\alpha,\beta,\gamma),
\]
which contradicts the fact that
$(x_1,x_2,\ldots,x_r)
=(a_1,a_2,\ldots,a_r)$
maximizes $\varphi$ over $\mathcal{A}_w^{d-1}$.
Therefore
$r\notin\{7,8,\ldots,d-1\}$ when $r>w/4$.
\end{proof}

By Lemmas \ref{xh4}--\ref{xh71}, we conclude that $r=3$, that is, the set $S$ contains exactly three consecutive integers. Assume that $S=\{i-1,i,i+1\}$ for $3\leq i\leq d-1$.
We are now in a position to complete the proof of Theorem \ref{main_th2}.

\begin{proof}[\em\textbf{Proof of Theorem \ref{main_th2}}]
Recall that $S=\{i-1,i,i+1\}$. Set
\[
a=n_{i-1}-1,\qquad
b=n_i-1,\qquad
c=n_{i+1}-1,
\qquad
x=a+c.
\]
Then $a,b,c\geq1$ and $a+b+c=w$.
Moreover, by the definition of $\varphi$,
\begin{equation}\label{eq25}
\begin{aligned}
\tau(G^*)
=\varphi(a,b,c)
&=(a+1)(b+1)(c+1)
(b+2)^{a+c}(a+c+2)^b\\
&=(a+1)(b+1)(c+1)
(b+2)^x(x+2)^b.
\end{aligned}
\end{equation}

First we prove that $|a-c|\leq1$. Suppose, without loss of generality,
that $a\geq c+2$. Then
$(a-1,b,c+1)\in\mathcal{A}_w^3
\subseteq\mathcal{A}_w^{d-1}$.
By the definition of $\varphi$,
\[
\frac{\varphi(a-1,b,c+1)}{\varphi(a,b,c)}
=
\frac{a(c+2)}{(a+1)(c+1)}
=
1+\frac{a-c-1}{(a+1)(c+1)}
>1.
\]
This contradicts the fact that $(a,b,c)$ maximizes $\varphi$ over
$\mathcal{A}_w^{d-1}$. Hence $|a-c|\leq1$.

Let
$p(t) \coloneqq \left\lfloor\frac{(t+2)^2}{4}\right\rfloor$.
Since $a+c=x$ and $|a-c|\leq1$, we have $(a+1)(c+1)=p(x)$.
Thus \eqref{eq25} becomes
\begin{equation}\label{eq26}
\varphi(a,b,c)
=
p(x) \cdot (b+1)(b+2)^x(x+2)^b.
\end{equation}
We claim that $x-b\geq0$. Suppose, to the contrary, that
$b\geq x+1$. Choose $a', c' \in \mathbb{N}^+$ such that
$a'+c'=x+1$ and $|a'-c'|\leq1$. Then
$(a',b-1,c')\in\mathcal{A}_w^3
\subseteq\mathcal{A}_w^{d-1}$.
By \eqref{eq26} and the definition of $p$,
\[
\begin{aligned}
\frac{\varphi(a',b-1,c')}{\varphi(a,b,c)}
=
\frac{p(x+1) \cdot b(b+1)^{x+1}(x+3)^{b-1}}
{p(x) \cdot (b+1)(b+2)^x(x+2)^b}=
\frac{p(x+1)}{p(x)}
\frac{b}{x+2}
\left(\frac{b+1}{b+2}\right)^x
\left(\frac{x+3}{x+2}\right)^{b-1}.
\end{aligned}
\]
Since $b\geq x+1$, we have $b-1\geq x$ and
$\frac{b+1}{b+2}\geq\frac{x+2}{x+3}$.
Therefore
\[
\left(\frac{b+1}{b+2}\right)^x
\left(\frac{x+3}{x+2}\right)^{b-1}
\geq1.
\]
Since $\frac{b}{x+2}\geq\frac{x+1}{x+2}$, we have
\[
\frac{\varphi(a',b-1,c')}{\varphi(a,b,c)}
\geq
\frac{p(x+1)}{p(x)}
\frac{x+1}{x+2}.
\]
We claim that
\[
\frac{p(x+1)}{p(x)}
\frac{x+1}{x+2}>1.
\]
Indeed, if $x=2s$, then
\[
\frac{p(x+1)}{p(x)}
\frac{x+1}{x+2}
=
\frac{(s+1)(s+2)}{(s+1)^2}
\cdot
\frac{2s+1}{2s+2}
=
\frac{(s+2)(2s+1)}{2(s+1)^2}
>1,
\]
while if $x=2s+1$, then
\[
\frac{p(x+1)}{p(x)}
\frac{x+1}{x+2}
=
\frac{(s+2)^2}{(s+1)(s+2)}
\cdot
\frac{2s+2}{2s+3}
=
\frac{2(s+2)}{2s+3}
>1.
\]
Thus
$\varphi(a',b-1,c')>\varphi(a,b,c)$, contradicting the maximality of $(a,b,c)$. Hence $x-b\geq0$.

We finally show that $x-b\leq1$. Suppose, to the contrary, that
$x-b\geq2$. Choose $a'', c'' \in \mathbb{N}^+$ such that
$a''+c''=x-1$ and $|a''-c''|\leq1$.
Then
$(a'',b+1,c'')\in\mathcal{A}_w^3
\subseteq\mathcal{A}_w^{d-1}$.
By \eqref{eq26} and the definition of $p$,
\[
\begin{aligned}
\frac{\varphi(a'',b+1,c'')}{\varphi(a,b,c)}
&=
\frac{p(x-1) \cdot (b+2)(b+3)^{x-1}(x+1)^{b+1}}
{p(x) \cdot (b+1)(b+2)^x(x+2)^b}\\
&=
\frac{p(x-1)}{p(x)}
\frac{x+1}{b+1}
\left(\frac{b+3}{b+2}\right)^{x-1}
\left(\frac{x+1}{x+2}\right)^b.
\end{aligned}
\]
Since $x\geq b+2$, we have $x-1\geq b+1$ and
$\frac{x+1}{x+2}\geq\frac{b+3}{b+4}$.
Therefore
\[
\begin{aligned}
\left(\frac{b+3}{b+2}\right)^{x-1}
\left(\frac{x+1}{x+2}\right)^b\geq
\left(\frac{b+3}{b+2}\right)^{b+1}
\left(\frac{b+3}{b+4}\right)^b=
\frac{b+3}{b+2}
\left(
\frac{(b+3)^2}{(b+2)(b+4)}
\right)^b
>1.
\end{aligned}
\]
Also, since $b+1\leq x-1$, we have
$\frac{x+1}{b+1}\geq\frac{x+1}{x-1}$.
Thus
\[
\frac{\varphi(a'',b+1,c'')}{\varphi(a,b,c)}
>
\frac{p(x-1)}{p(x)}
\frac{x+1}{x-1}.
\]
We claim that
\[
\frac{p(x-1)}{p(x)}
\frac{x+1}{x-1}>1.
\]
Indeed, if $x=2s$, then
\[
\frac{p(x-1)}{p(x)}
\frac{x+1}{x-1}
=
\frac{s(s+1)}{(s+1)^2}
\cdot
\frac{2s+1}{2s-1}
=
\frac{s(2s+1)}{(s+1)(2s-1)}
>1,
\]
while if $x=2s+1$, then
\[
\frac{p(x-1)}{p(x)}
\frac{x+1}{x-1}
=
\frac{(s+1)^2}{(s+1)(s+2)}
\cdot
\frac{2s+2}{2s}
=
\frac{(s+1)^2}{s(s+2)}
>1.
\]
Hence $\varphi(a'',b+1,c'')>\varphi(a,b,c)$,
 contradicting the maximality of $(a,b,c)$. Thus
$x-b\leq1$.

Combining the two inequalities gives $0\leq x-b\leq1$. Since $x$ and $b$ are integers, $x-b\in\{0,1\}$.
Together with $x+b=w$, this yields
$b=\left\lfloor\frac{w}{2}\right\rfloor$ and
$x=\left\lceil\frac{w}{2}\right\rceil$.
Since $a+c=x$ and $|a-c|\leq1$, we further obtain
$\{a,c\}
=
\left\{
\left\lfloor\frac{x}{2}\right\rfloor,
\left\lceil\frac{x}{2}\right\rceil
\right\}
=
\left\{
\left\lfloor\frac{w+1}{4}\right\rfloor,
\left\lceil\frac{w}{4}\right\rceil
\right\}$.
Recalling that $a=n_{i-1}-1$,
$b=n_i-1$ and
$c=n_{i+1}-1$,
we conclude that
$n_i=\left\lfloor\frac{w}{2}\right\rfloor+1$
and $n_j=1$
for all $j\notin\{i-1,i,i+1\}$, while
$\{n_{i-1},n_{i+1}\}
=
\left\{
\left\lfloor\frac{w+1}{4}\right\rfloor+1,
\left\lceil\frac{w}{4}\right\rceil+1
\right\}$.

Substituting these values into \eqref{eq1}, we obtain
\[
\begin{aligned}
\tau(G^*)
=
\left(\left\lfloor\frac{w+1}{4}\right\rfloor+1\right)
\left(\left\lceil\frac{w}{4}\right\rceil+1\right)
\left(\left\lfloor\frac{w}{2}\right\rfloor+1\right)
\left(\left\lfloor\frac{w}{2}\right\rfloor+2\right)^{
\left\lceil\frac{w}{2}\right\rceil}
\left(\left\lceil\frac{w}{2}\right\rceil+2\right)^{
\left\lfloor\frac{w}{2}\right\rfloor}.
\end{aligned}
\]
This proves the stated upper bound and characterizes all graphs attaining
equality. 
\end{proof}

\section{Concluding remarks}
The proofs of Lemmas \ref{xh1}--\ref{xh3} remain valid for every $w\geq1$. Hence, for $1\leq w\leq10$, the same structural reduction shows that the problem reduces to maximizing $\varphi$ over $\mathcal{A}_w^{d-1}$. Since $\mathcal{A}_w^{d-1}$ is finite for each $w \le 10$, the problem of maximizing the number of spanning trees in $\mathcal{B}(n, d)$ for $d \ge 4$ and $n - d - 1 = w \le 10$ can be resolved computationally using the \texttt{Python} script given in Appendix \ref{sc_code}. The complete solution to the considered extremal problem is thus summarized in Table \ref{summary_tab}. In particular, for $d \ge 4$ and $w \in \{ 3, 4, 5, \ldots, 9 \}$, the solution has the same form as in Theorem \ref{main_th2} for $d \ge 4$ and $w \ge 11$. On the other hand, if $d\geq5$ and $w=10$, then the graphs maximizing the number of spanning trees in $\mathcal{B}(n, d)$ are precisely those of the form
\[
P_{d+1}\circ\left(
\underbrace{1,1,\ldots,1}_{\mbox{\scriptsize $t$ times}},
2,5,5,2,
\underbrace{1,1,\ldots,1}_{\mbox{\scriptsize $d-3-t$ times}}
\right)
\]
for some $t\in\{1,2,\ldots,d-4\}$. This extremal structure is different from that for $w\geq11$.

\begin{table}[h]
\centering
\begin{tabular}{|c|c|}
\hline
Case & Extremal graph(s)\\
\hline
\hline
$d = 2$ & $K_{\lfloor \frac{n}{2} \rfloor, \lceil \frac{n}{2} \rceil}$\\
\hline
$d = 3$ & $K_{\lfloor \frac{n}{2} \rfloor, \lceil \frac{n}{2} \rceil}'$\\
\hline
$d \ge 4$ and $w = 0$ & $P_{d + 1}$\\
\hline
$d \ge 4$ and $w = 1$ & $P_{d + 1} \circ (1, \ldots, 1, 2, 1, \ldots, 1)$\\
\hline
$d \ge 4$ and $w = 2$ & $P_{d + 1} \circ (1, \ldots, 1, 2, 2, 1, \ldots, 1)$\\
\hline
$d \ge 4$, $w \ge 3$ and $w \neq 10$ & \multirow{2}{*}{$P_{d + 1} \circ (1, \ldots, 1, \lfloor \frac{w + 1}{4} \rfloor + 1, \lfloor \frac{w}{2} \rfloor + 1, \lceil \frac{w}{4} \rceil + 1, 1, \ldots, 1)$}\\
\cline{1--1}
$d = 4$ and $w = 10$ & \\
\hline
$d \ge 5$ and $w = 10$ & $P_{d + 1} \circ (1, \ldots, 1, 2, 5, 5, 2, 1, \ldots, 1)$\\
\hline
\end{tabular}
\caption{Extremal graphs maximizing the number of spanning trees in $\mathcal{B}(n,d)$.}
\label{summary_tab}
\end{table}

In this paper, we solved the spanning tree maximization problem for connected bipartite graphs with prescribed order and diameter. For $d\ge4$ and $w\ge11$, every extremal graph has exactly
three consecutive nontrivial entries, with the extra vertices
distributed approximately in the ratio $1:2:1$.
In this range, the maximum spanning tree number depends on
$n$ and $d$ only through $w=n-d-1$.
When $w=10$ and $d\ge5$, an exceptional structure with four
consecutive nontrivial parts occurs.

Since spanning trees represent minimal connected spanning backbones of a network, the obtained extremal graphs can be viewed as bipartite network topologies with the largest connectivity redundancy measured by the number of spanning trees under the prescribed order and diameter constraints. From the viewpoint of network reliability, this provides a structural characterization of bipartite networks that maximize the number of minimal connected operating states. Moreover, because a larger number of spanning trees gives a network more alternative ways to remain connected after edge failures, our result also indicates enhanced connectivity robustness of the extremal topologies.

The present work focuses on bipartite network models. A natural and interesting direction is to consider general network models under the same order and diameter constraints. Let $\mathcal{G}(n,d)$ be the set of all connected graphs of order $n$ and diameter $d$. This leads to the following problem.

\begin{problem}
Determine the maximum number of spanning trees over all graphs in $\mathcal{G}(n,d)$, together with the corresponding extremal graphs.
\end{problem}

\section*{Declarations}

\noindent\textbf{Generative AI and AI-assisted technologies.}
AI assistance was used in the function comparisons in the proofs of Lemmas 4.7 and 4.8. All AI-generated output was
critically reviewed and verified by the authors.
\smallskip

\noindent\textbf{Funding.}
The work was supported by the National Natural Science Foundation of China (Grant No.\ 12271251), Postgraduate Research \& Practice Innovation Program of Jiangsu Province, grant number KYCX25\_0625, the Ministry of Science, Technological Development and Innovation of the Republic of Serbia, grant number 451-03-34/2026-03/200102, and the Science Fund of the Republic of Serbia, grant \#6767, Lazy walk counts and spectral radius of threshold graphs --- LZWK.
\smallskip

\noindent\textbf{Conflict of interest.}
The authors declare that they have no conflict of interest.
\smallskip

\noindent\textbf{Data and code availability.}
No datasets were generated or analyzed during the current study. The \texttt{Python} script used to computationally verify some of the results in Table \ref{summary_tab} is given in Appendix \ref{sc_code}.

\appendix
\section{\texttt{Python} script}\label{sc_code}

\begin{lstlisting}[language=Python, frame = trBL, aboveskip=10pt, belowskip=10pt, numbers=left, rulecolor=\color{black}]
from itertools import combinations


def compute_phi(vector):
    r"""
    This function computes \varphi(a_1, a_2, \ldots, a_r) for a given tuple
    (a_1, a_2, \ldots, a_r).
    """

    result = 1
    for i in range(len(vector)):
        neighbor_sum = 2
        if i > 0:
            neighbor_sum += vector[i - 1]
        if i < len(vector) - 1:
            neighbor_sum += vector[i + 1]

        for _ in range(vector[i]):
            result *= neighbor_sum

        result *= vector[i] + 1

    return result


def resolve_w(w, max_r=None):
    r"""
    This function finds the extremal tuples (a_1, a_2, \ldots, a_r) with respect
    to \varphi for a given w. The argument ``max_r``, if not `None`, limits the
    number of entries allowed in the tuples. Its default value is `None`.
    """

    best_phi = -1
    best_vectors = []
    max_r = max_r if max_r is not None else w + 1

    for slices in range(1, max_r + 1):
        # The increasing tuples of r - 1 integers from 1, 2, \ldots, w - 1 give
        # rise to tuples of r integers that sum up to w.
        for inc in combinations(range(1, w), slices - 1):
            # Extract the underlying tuple (a_1, a_2, \ldots, a_r).
            if slices > 1:
                vector = [0] * slices
                vector[0] = inc[0]
                vector[-1] = w - inc[-1]
                for i in range(1, slices - 1):
                    vector[i] = inc[i] - inc[i - 1]
            else:
                vector = [w]

            # Compute \varphi(a_1, a_2, \ldots, a_r).
            phi_value = compute_phi(vector)

            # Update the list of best tuples.
            if phi_value > best_phi:
                best_phi = phi_value
                best_vectors = [vector]
            elif phi_value == best_phi:
                best_vectors.append(vector)

    return best_vectors, best_phi


def check_uniqueness(best_vectors):
    """
    This function confirms that a provided list of tuples contains only one tuple
    up to reflection, and then returns that tuple.
    """

    assert len(best_vectors) == 1 or len(best_vectors) == 2
    if len(best_vectors) == 2:
        first = best_vectors[0]
        second = best_vectors[1][::-1]
        assert first == second

    return tuple(best_vectors[-1])


def check_favorite_form(vector):
    r"""
    This function confirms that the provided tuple has the extremal form described
    in the paper, i.e., the form (1) for w = 1, (1, 1) for w = 2, and (a, b, c)
    with b = \lfloor w/2 \rfloor and 0 \le a - c \le 1, for w \ge 3.
    """

    w = sum(vector)

    if w == 1:
        assert vector == (1,)
    elif w == 2:
        assert vector == (1, 1)
    else:
        assert w >= 3 and len(vector) == 3 and vector[1] == w // 2
        rest = (w + 1) // 2
        assert vector[0] == (rest + 1) // 2 and vector[2] == rest // 2


if __name__ == "__main__":
    print("Solution for d = 4 and w = 10:", end=" ", flush=True)

    best_vectors, _ = resolve_w(w=10, max_r=3)
    solution = check_uniqueness(best_vectors)
    print(f"{solution}.")
    check_favorite_form(solution)

    print("Now, assume that d >= 4 for any w, except w = 10, where we assume that d >= 5.")

    for w in range(1, 21):
        print(f"w = {w}:", end=" ", flush=True)

        best_vectors, _ = resolve_w(w)
        solution = check_uniqueness(best_vectors)
        print(f"{solution}.")

        # The same extremal form is attained for every w, except for w = 10.
        if w != 10:
            check_favorite_form(solution)

print("Done!")
\end{lstlisting}

\end{document}